\numberwithin{equation}{section}
\newtheorem{theorem}{Theorem}[section]
\newtheorem{proposition}[theorem]{Proposition}
\newtheorem{conjecture}[theorem]{Conjecture}
\newtheorem{lemma}[theorem]{Lemma}
\newtheorem{remark}[theorem]{Remark}
\newtheorem{definition}[theorem]{Definition}
\renewcommand{\eqref}[1]{{\rm (\ref{#1})}}
\begin{document}
	
	\title[On a conjecture of transposed Poisson $n$-Lie algebras]
	{On a conjecture of transposed Poisson $n$-Lie algebras}
	
	\author{Junyuan Huang, Xueqing Chen, Zhiqi Chen and Ming Ding}
	\address{School of Mathematics and Information Science\\
		Guangzhou University, Guangzhou 510006, P.R.China}
	\email{jy4545@e.gzhu.edu.cn (J.Huang)}
	\address{Department of Mathematics,
		University of Wisconsin-Whitewater\\
		800 W. Main Street, Whitewater, WI.53190. USA}
	\email{chenx@uww.edu (X.Chen)}
	\address{School of Mathematics and Statistics\\
 Guangdong University of Technology, Guangzhou 510520, P.R. China} \email{chenzhiqi@nankai.edu.cn (Z.Chen)}
	\address{School of Mathematics and Information Science\\
		Guangzhou University, Guangzhou 510006, P.R.China}
	\email{dingming@gzhu.edu.cn (M.Ding)}



	
	
	\keywords{Lie algebra, Poisson algebra, transposed Poisson algebra, transposed Poisson $n$-Lie algebra}

	\begin{abstract}
      In this paper, we obtain a rich family of identities for transposed Poisson $n$-Lie algebras, and then prove  the conjecture of Bai, Bai, Guo and Wu in \cite{BBGW} under certain strong condition.
	\end{abstract}
	
	\maketitle
	
	\section{Introduction}
			  A Poisson algebra is a triple
		$(L, \cdot ,\left[ { - , - } \right])$, where $(L, \cdot )$ is a commutative associative algebra and $(L,\left[ { - , - } \right])$ is a Lie algebra that satisfies the following Leibniz rule:
		$$\left[ {x,y\cdot z} \right] = \left[ {x,y} \right]\cdot z + y\cdot \left[ {x,z} \right],\forall x,y,z \in L.$$

		Poisson algebras  appeared naturally from the study of Hamiltonian mechanics and have played  a significant role
		in mathematics and physics, such as Poisson manifolds,
		integral systems,  algebraic geometry, quantum groups and quantum field theory (see \cite{BV}\cite{CP}\cite{Od}\cite{Po}).
Poisson algebras can be viewed as the algebraic counterpart of Poisson manifolds.
		In the study of Poisson algebras,  many other algebraic structures have been found such as  Jacobi algebras \cite{AM}\cite{CK0}, Poisson bialgebras \cite{LBS}\cite{NB},  Gerstenhaber algebras and Lie-Rinehart algebras \cite{Ger}\cite{KS}\cite{Rin}, $F$-manifold algebras
		\cite{Dot}, Novikov-Poisson algebras \cite{XuX}, quasi-Poisson algebras \cite{BY}
		and Poisson $n$-Lie algebras \cite{CK}.
		
		As a dual
		notion of a Poisson algebra, the concept of a transposed Poisson algebra has  recently been  introduced by Bai, Bai, Guo and Wu  \cite{BBGW}.
		A transposed Poisson algebra $(L, \cdot ,\left[ { - , - } \right])$ is defined by exchanging the roles of the two binary operations in the Leibniz rule
		defining the Poisson algebra:
		$$2z \cdot \left[ {x,y} \right] = \left[ {z \cdot x,y} \right] + \left[ {x,z \cdot y} \right],\forall x,y,z \in L,$$
		where $(L, \cdot )$ is a commutative associative algebra and $(L,\left[ { - , - } \right])$ is a Lie algebra.
		
		It is shown that a transposed Poisson algebra  possesses many important identities and properties, and can be naturally obtained   through taking the
		commutator in the Novikov-Poisson algebra \cite{BBGW}. There are many results on transposed Poisson algebras, such as transposed
		Hom-Poisson algebras \cite{LS}, transposed BiHom-Poisson algebras \cite{ML}, a bialgebra theory for transposed Poisson algebras \cite{LB}, a relation between $\frac{1}{2}$-derivations of Lie algebras and transposed
		Poisson algebras \cite{FKL}, and the tranposed Poisson structures with fixed Lie
		algebras (see \cite{BFK} for more details).
		
		The notion of  an $n$-Lie algebra (see Definition \ref{def2.2}) introduced by Filippov \cite{Fil} has found use in many fields in mathematics
		and physics \cite{BN1}\cite{BN2}\cite{Nam}\cite{Tak}. To explicitly construct $n$-Lie algebras becomes one of the important problems in this theory. In \cite{Dzh}, Dzhumadil$'$dav introduced the notion of a Poisson $n$-Lie algebra which can be used to construct an ($n+1$)-Lie algebra under  an additional strong condition. In \cite{BBGW}, Bai, Bai, Guo and Wu showed that this strong condition for $n=2$
		holds automatically for a transposed Poisson algebra, and gave a construction of 3-Lie algebras from transposed Poisson algebras with derivations.  They also found that this constructed $3$-Lie algebra and the commutative
		associative algebra satisfy the   analog of the compatibility condition in the transposed Poisson  algebra, which is called a transposed Poisson $3$-Lie algebra.
		This motivates them to introduce the concept of a transposed Poisson $n$-Lie algebra (see Definition \ref{def2.3}), and then to propose the following conjecture:
		\begin{conjecture}\cite{BBGW}\label{con}
			Let $n\geq 2$ be an integer and $(L,\cdot,\mu_n)$  a transposed Poisson $n$-Lie algebra. Let $D$ be a derivation of $(L,\cdot)$ and $(L,\mu_n)$.
			Define an
			$(n+1)$-ary operation
			$$\mu_{n+1}(x_1,\cdots,x_{n+1}):=\sum_{i=1}^{n+1} (-1)^{i-1}D(x_i) \mu_n(x_1,\cdots, \hat{x}_i,\cdots,x_{n+1}),\quad \forall x_1,\cdots,x_{n+1}\in L,
			$$
			where $\hat{x}_i$ means that the $i$-th entry is omitted. Then $(L,\cdot,\mu_{n+1})$ is a transposed Poisson $(n+1)$-Lie algebra.
		\end{conjecture}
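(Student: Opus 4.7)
The plan is to verify three properties of $(L,\cdot,\mu_{n+1})$: (i) total skew-symmetry of $\mu_{n+1}$; (ii) the Filippov fundamental identity for $\mu_{n+1}$; and (iii) the transposed Poisson compatibility
$$(n+1)\, z\cdot \mu_{n+1}(x_1,\ldots,x_{n+1}) \;=\; \sum_{i=1}^{n+1} \mu_{n+1}(x_1,\ldots,z\cdot x_i,\ldots,x_{n+1}).$$

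I would first dispose of the easy items (i) and (iii). Skew-symmetry follows by swapping two arguments in the defining sum: the sign $(-1)^{i-1}$ together with the skew-symmetry of $\mu_n$ produces exactly an overall minus sign after relabelling. For (iii), I would start from the definition of $\mu_{n+1}$ and move $z$ past the $D(x_i)$ factor in each summand, using commutativity and associativity of $\cdot$. Then in each summand the factor $z\cdot \mu_n(x_1,\ldots,\widehat{x_i},\ldots,x_{n+1})$ can be expanded by the transposed Poisson compatibility of $(L,\cdot,\mu_n)$, and $z\cdot D(x_i)$ can be rewritten via the Leibniz rule $D(z\cdot x_i)=D(z)\cdot x_i+z\cdot D(x_i)$. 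The cross terms involving $D(z)$ should cancel pairwise using the skew-symmetry of $\mu_n$, leaving the required relation after a counting check of the multiplicity $(n+1)$.

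The main obstacle is (ii), the identity
$$\mu_{n+1}\bigl(\mu_{n+1}(x_1,\ldots,x_{n+1}),y_2,\ldots,y_{n+1}\bigr)\;=\;\sum_{k=1}^{n+1}\mu_{n+1}\bigl(x_1,\ldots,\mu_{n+1}(x_k,y_2,\ldots,y_{n+1}),\ldots,x_{n+1}\bigr).$$
Expanding each outer $\mu_{n+1}$ by its definition and then the inner $\mu_{n+1}$ as well, I would use that $D$ is a derivation of $\mu_n$ to distribute $D$ across the resulting brackets, and then appeal to the Filippov identity of $\mu_n$ to collapse the doubly nested $\mu_n$'s. After this reduction, every term has the schematic form $P\cdot \mu_n(\cdots)\,\mu_n(\cdots)$, where $P$ is one of $D(a)D(b)$, $D(D(a))\,b$, $D(a\cdot b)$, or $a\cdot b$. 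The transposed Poisson compatibility for $\mu_n$ then lets me convert factors of the form $a\cdot \mu_n(\cdots)$ into sums of brackets with $a$ absorbed into one entry, so that both sides eventually live in the same ``normal form''.

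At this stage a large combinatorial identity among these normal-form terms must still be established, and this is precisely where the conjecture meets Dzhumadil$'$daev's \emph{strong condition} in the Poisson $n$-Lie setting. My plan, following the promise of the abstract, is therefore to first derive the rich family of auxiliary identities for a general transposed Poisson $n$-Lie algebra (obtained by iterating the compatibility together with skew-symmetry, and by evaluating the Filippov identity on specially chosen arguments), and then to use these identities, under the stated strong condition, to cancel the surplus cross-terms and close the computation. The hard part of the proof is isolating and proving precisely the right auxiliary identities; once they are available, the verification of the Filippov identity for $\mu_{n+1}$ reduces to careful bookkeeping.
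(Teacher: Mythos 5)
Your outline follows the same overall route as the paper --- check skew-symmetry, the Filippov identity and the compatibility for $\mu_{n+1}$ after first building a stock of auxiliary identities --- and you correctly sense that an extra ``strong'' hypothesis beyond the conjecture's own assumptions is needed to close the argument (the paper likewise proves the conjecture only under such a condition, Definition \ref{def2.4}, so neither you nor the paper proves the statement as literally posed). However, two of the concrete steps you describe would fail as stated. First, in the compatibility check the cross terms $\sum_i(-1)^{i-1}D(z)\,x_i\,\mu_n(x_1,\ldots,\hat x_i,\ldots,x_{n+1})$ do \emph{not} cancel pairwise by skew-symmetry of $\mu_n$: each summand carries a different external factor $x_i$, so no pairing is available. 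Their vanishing is the nontrivial identity \eqref{(np1)}, which is a consequence of the transposed Poisson compatibility \eqref{(np)} of $\mu_n$ (one must first trade each $x_i\,\mu_n(\cdots)$ for a sum of brackets via \eqref{(np)}, and only then cancel in pairs). Second, in the Filippov identity you cannot ``appeal to the Filippov identity of $\mu_n$ to collapse the doubly nested $\mu_n$'s'': after expanding both layers, the relevant argument of the outer $n$-bracket is a product $D(y_i)\cdot\mu_n(\cdots)$, to which \eqref{(nl)} does not apply. What is needed is the derived identity \eqref{(np2)}, namely $\sum_i(-1)^{i-1}[h[y_i,x_1,\ldots,x_{n-1}],y_1,\ldots,\hat y_i,\ldots,y_n]=[h[y_1,\ldots,y_n],x_1,\ldots,x_{n-1}]$, which holds only because of the transposed Poisson structure and whose proof is one of the main computations of the paper.

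More fundamentally, the proposal defers its entire substance. You write that ``the hard part of the proof is isolating and proving precisely the right auxiliary identities,'' but those identities --- \eqref{(np1)}--\eqref{(np4)}, the consequence \eqref{(strong1)} of the strong condition, and the two derivation identities of Lemma \ref{3.3} --- together with the bookkeeping that assembles them \emph{are} the proof. Sorting the surviving terms into the schematic shapes $D(a)D(b)$, $D^2(a)\,b$, $D(ab)$, $ab$ is not by itself enough to see which groups vanish outright (via \eqref{(np1)} or \eqref{(np3)}) and which must be paired off using the strong condition through \eqref{(strong1)}. As it stands the plan points in the right direction but contains no verifiable argument for the Filippov identity, which is precisely where the difficulty lies.
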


In this  paper, based on the identities for transposed Poisson $n$-Lie algebras given in Section 2, we  prove that Conjecture  \ref{con} holds under some strong condition described in Section 3 (see Definition \ref{def2.4} and Theorem \ref{main}).
		
		Throughout the paper, all vector spaces are taken over a field  of characteristic zero. To simplify notations, the commutative associative multiplication $ \cdot $ will  be omitted unless the emphasis is needed.

	\section{Identities in  transposed Poisson $n$-Lie algebras}
	
In this section, we firstly recall some definitions, and then construct a class of  identities for transposed Poisson $n$-Lie algebras.

\begin{definition}\cite{Fil}\label{def2.2}
Let $n\geq 2$ be an integer. An $n$-Lie algebra is a vector space $L$ together with a skew-symmetric linear map $\left[ { - , \cdots , - } \right]:{ \otimes ^n}L \to L$ such that for any $x_i, y_j \in L, 1 \le i \le n-1, 1 \le j \le n,$ the following identity holds:
\begin{equation}\label{(nl)}
		{\left[ {{{\left[ {{y_1}, \cdots ,{y_n}} \right]}},{x_1}, \cdots ,{x_{n - 1}}} \right]} = \sum\limits_{i = 1}^n {{{( - 1)}^{i - 1}}{{\left[ {{{\left[ {{y_i},{x_1}, \cdots ,{x_{n - 1}}} \right]}},{y_1}, \cdots ,{{\hat y}_i}, \cdots ,{y_n}} \right]}}}.
	\end{equation}
\end{definition}

\begin{definition}\cite{BBGW}\label{def2.3}
Let $n\geq 2$ be an integer and $L$  a vector space. The triple $(L, \cdot ,\left[ { - , \cdots , - } \right])$ is called a  transposed Poisson $n$-Lie algebra if $(L, \cdot )$ is a commutative associative algebra and $(L,\left[ { - , \cdots , - } \right])$ is an $n$-Lie algebra such that for any $h,{x_i} \in L, 1 \le i \le n$,  the following identity holds:	
	\begin{equation}\label{(np)}
		nh\left[ {{x_1}, \cdots ,{x_n}} \right] = \sum\limits_{i = 1}^n {\left[ {{x_1}, \cdots ,h{x_i}, \cdots ,{x_n}} \right]}.
	\end{equation}	
\end{definition}

Some identities for transposed Poisson  algebras in \cite{BBGW} can be extended to the following for transposed Poisson $n$-Lie algebras.

\begin{theorem}\label{2.4}
	Let  $(L, \cdot ,\left[ { - , \cdots , - } \right])$ be a transposed Poisson $n$-Lie algebra. Then the following identities
	hold:
\begin{itemize}
\item[(1)] For any  $x_i\in L, 1\leq i\leq n+1$, we have
\begin{align}\label{(np1)}
\sum\limits_{i = 1}^{n + 1} {{{( - 1)}^{i - 1}}{x_i}\left[ {{x_1}, \cdots ,{{\hat x}_i}, \cdots ,{x_{n + 1}}} \right]}  = 0;
\end{align}
\item[(2)] For any $h\in L$,  $x_i\in L, 1\leq i\leq n-1$ and $y_i\in L, 1\leq i\leq n$, we have
\begin{align}\label{(np2)}
\sum\limits_{i = 1}^n {{{( - 1)}^{i - 1}}\left[ {h\left[ {{y_i},{x_1}, \cdots ,{x_{n - 1}}} \right],{y_1}, \cdots ,{{\hat y}_i}, \cdots ,{y_n}} \right] = } \left[ {h\left[ {{y_1}, \cdots ,{y_n}} \right],{x_1}, \cdots ,{x_{n - 1}}} \right];
\end{align}
\item[(3)] For any  $x_i\in L, 1\leq i\leq n-1$ and $y_i\in L, 1\leq i\leq n+1$, we have
\begin{align}\label{(np3)}
\sum\limits_{i = 1}^{n + 1} {{{( - 1)}^{i - 1}}\left[{y_i}, {{x_1}, \cdots ,{x_{n-1}}} \right]\left[ {{y_1}, \cdots ,{{\hat y}_i}, \cdots ,{y_{n + 1}}} \right]}  = 0;
\end{align}
\item[(4)] For any  $x_1,x_2\in L$ and $y_i\in L, 1\leq i\leq n$, we have
\begin{align}\label{(np4)}
\sum\limits_{i = 1}^n {\sum\limits_{j = 1,j \ne i}^n {\left[ {{y_1}, \cdots ,{y_i}{x_1}, \cdots ,{y_j}{x_2}, \cdots ,{y_n}} \right]} }  = n(n - 1){x_1}{x_2}\left[ {{y_1},{y_2}, \cdots ,{y_n}} \right].
\end{align}
\end{itemize}
\end{theorem}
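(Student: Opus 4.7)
The plan is to establish the four identities in order, each relying on the previous ones together with the Filippov identity \eqref{(nl)} and the transposed Poisson compatibility \eqref{(np)}. For \eqref{(np1)}, I multiply each summand $x_i[x_1,\ldots,\hat{x}_i,\ldots,x_{n+1}]$ by $n$ and use \eqref{(np)} to expand it as $\sum_{j\neq i}[x_1,\ldots,x_ix_j,\ldots,\hat{x}_i,\ldots,x_{n+1}]$. Summing with signs $(-1)^{i-1}$ produces a double sum over ordered pairs $(i,j)$ with $i\neq j$; for each unordered pair $\{i,j\}$ with $i<j$, the two contributing $n$-brackets both contain the factor $x_ix_j=x_jx_i$, but at positions $j-1$ and $i$ respectively. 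Aligning them by skew-symmetry introduces a sign $(-1)^{j-1-i}$, and the combined coefficients $(-1)^{i-1}+(-1)^{j-1}(-1)^{j-1-i}=(-1)^{i-1}+(-1)^{i}$ cancel.

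For \eqref{(np2)}, write $R$ and $S$ for the right- and left-hand sides. Starting from $R=[h[y_1,\ldots,y_n],x_1,\ldots,x_{n-1}]$, use \eqref{(np)} to rewrite $h[y_1,\ldots,y_n]$ as $\frac{1}{n}\sum_k[y_1,\ldots,hy_k,\ldots,y_n]$ and apply \eqref{(nl)} to each bracket $[[y_1,\ldots,hy_k,\ldots,y_n],x_1,\ldots,x_{n-1}]$. Split the Filippov sum into the position $j=k$ (where $hy_k$ sits) and the positions $j\neq k$: the first family, after pulling $h$ out of $[hy_k,x_1,\ldots,x_{n-1}]$ by a second use of \eqref{(np)}, contributes $nS$ minus correction terms involving $hx_l$; the second family, after swapping the order of summation and using \eqref{(np)} in reverse on the inner sum $\sum_{k\neq j}[[y_j,x_1,\ldots,x_{n-1}],y_1,\ldots,hy_k,\ldots,\hat{y}_j,\ldots,y_n]$, contributes $nh[[y_1,\ldots,y_n],x_1,\ldots,x_{n-1}]-S$. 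A final application of \eqref{(np)} collapses the remaining correction terms together with this new $h\cdot[\cdot]$ term into $R$, so the equation becomes $nR=(n-1)S+R$, giving $R=S$.

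The main obstacle is \eqref{(np3)}. Put $A_i:=[y_i,x_1,\ldots,x_{n-1}]$, $Y_i:=[y_1,\ldots,\hat{y}_i,\ldots,y_{n+1}]$, and let $P$ denote its left-hand side. Applying \eqref{(np)} with $h=A_i$ to the $n$-bracket $Y_i$ gives $nA_iY_i=\sum_{m\neq i}[y_1,\ldots,A_iy_m,\ldots,\hat{y}_i,\ldots,y_{n+1}]$. Using commutativity $A_iy_m=y_mA_i$ and skew-symmetry to bring that entry to the first slot, then pairing $(i,m)$ with $(m,i)$ over unordered pairs $i<m$, I obtain
\[
nP=\sum_{i<m}(-1)^{i+m}\bigl[y_iA_m-y_mA_i,\;y_1,\ldots,\hat{y}_i,\ldots,\hat{y}_m,\ldots,y_{n+1}\bigr].
\]
Independently, for each $k=1,\ldots,n+1$ apply \eqref{(np2)} with $h=y_k$ and $n$-tuple $(y_1,\ldots,\hat{y}_k,\ldots,y_{n+1})$, and sum the resulting $n+1$ equations with signs $(-1)^{k-1}$. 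By \eqref{(np1)} the right-hand sides combine to $\bigl[\sum_k(-1)^{k-1}y_kY_k,\,x_1,\ldots,x_{n-1}\bigr]=0$, while the left-hand sides, after the analogous re-indexing and $(k,m)\leftrightarrow(m,k)$ pairing, reproduce exactly the expression displayed above. Hence it vanishes, giving $nP=0$ and thus \eqref{(np3)}. The delicate point is the sign and re-indexing bookkeeping in both pairing arguments.

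Finally, \eqref{(np4)} follows by applying \eqref{(np)} twice: from $nx_1[y_1,\ldots,y_n]=\sum_i[y_1,\ldots,x_1y_i,\ldots,y_n]$, apply \eqref{(np)} again with $h=x_2$ to each bracket on the right and sum over $i$. The diagonal terms $[y_1,\ldots,(x_1x_2)y_i,\ldots,y_n]$ sum to $n(x_1x_2)[y_1,\ldots,y_n]$ by a third use of \eqref{(np)}, while the off-diagonal terms form precisely the left-hand side of \eqref{(np4)}; comparing both sides yields the factor $n(n-1)$.
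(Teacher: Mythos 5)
Your proposal is correct and takes essentially the same route as the paper: all four identities are derived from the same applications of the Filippov identity \eqref{(nl)} and the compatibility condition \eqref{(np)}, with (1) and (4) matching the paper's computations exactly and (2), (3) differing only in bookkeeping (the paper sums the $n+1$ instances of \eqref{(np)} first and then simplifies, while you expand the right-hand side directly; in (3) the paper collapses the double sum by \eqref{(np2)} and \eqref{(np1)} rather than forming the antisymmetrized pair sum). One harmless slip: in (3) the expression produced by summing the \eqref{(np2)}-instances is $-nP$ rather than $nP$ with your sign conventions, but since it vanishes the conclusion is unaffected.
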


\begin{proof}
(1)  By Equation (\ref{(np)}), for any $1\leq i\leq n+1$, we have
	$$n{x_i}\left[ {{x_1}, \cdots ,{x_{i - 1}},{x_{i + 1}}, \cdots ,{x_{n+1}}} \right] = \sum\limits_{j \ne i}^{} {\left[ {{x_1}, \cdots ,{x_{i - 1}},{x_{i + 1}}, \cdots ,{x_i}{x_j}, \cdots ,{x_{n+1}}} \right]}. $$
	
Thus, we obtain
	$$\sum\limits_{i = 1}^{n + 1} {{{( - 1)}^{i - 1}}n{x_i}\left[ {{x_1}, \cdots ,{{\hat x}_i}, \cdots ,{x_{n + 1}}} \right]}  = \sum\limits_{i = 1}^{n + 1} {\sum\limits_{j = 1,j \ne i}^{n + 1} {{{( - 1)}^{i - 1}}\left[ {{x_1}, \cdots ,{{\hat x}_i}, \cdots ,{x_i}{x_j}, \cdots ,{x_{n + 1}}} \right]} }.$$
	
Note that for any $i>j$, we have
\begin{eqnarray*}
&&{( - 1)^{i - 1}}\left[ {{x_1}, \cdots ,{x_{j - 1}},{x_i}{x_j},{x_{j + 1}}, \cdots ,{{\hat x}_i}, \cdots ,{x_n}} \right] \\
&&+ {( - 1)^{j - 1}}\left[ {{x_1}, \cdots ,{{\hat x}_j}, \cdots ,{x_{i - 1}},{x_j}{x_i},{x_{i + 1}}, \cdots ,{x_n}} \right]\\
&=& {( - 1)^{i - 1 + (i - j - 1)}}\left[ {{x_1}, \cdots ,{x_{j - 1}},{x_{j + 1}}, \cdots ,{x_{i - 1}},{x_i}{x_j},{x_{i + 1}}, \cdots ,{x_n}} \right] \\
&&	+ {( - 1)^{j - 1}}\left[ {{x_1}, \cdots ,{{\hat x}_j}, \cdots ,{x_{i - 1}},{x_j}{x_i},{x_{i + 1}}, \cdots ,{x_n}} \right]\\
&=& \big( {{{( - 1)}^{ - j - 2}} + {{( - 1)}^{j - 1}}} \big)\left[ {{x_1}, \cdots ,{x_{j - 1}},{x_{j + 1}}, \cdots ,{x_{i - 1}},{x_i}{x_j},{x_{i + 1}}, \cdots ,{x_n}} \right]\\
&=& 0,
\end{eqnarray*}	
which gives
$\sum\limits_{i = 1}^{n + 1} {\sum\limits_{j = 1,j \ne i}^{n + 1} {{{( - 1)}^{i - 1}}\left[ {{x_1}, \cdots ,{{\hat x}_i}, \cdots ,{x_i}{x_j}, \cdots ,{x_{n + 1}}} \right]} } =0.$
	
Hence, we get
	$$\sum\limits_{i = 1}^{n + 1} {{{( - 1)}^{i - 1}}n{x_i}\left[ {{x_1}, \cdots ,{{\hat x}_i}, \cdots ,{x_{n + 1}}} \right]}  =0.$$
	
(2)  By Equation (\ref{(np)}), we have
\begin{eqnarray*}
	&&- \left[ {h\left[ {{y_1}, \cdots ,{y_n}} \right],{x_1}, \cdots ,{x_{n - 1}}} \right] - \sum\limits_{i = 1}^{n - 1} {\left[ {\left[ {{y_1}, \cdots ,{y_n}} \right],{x_1}, \cdots ,h{x_i}, \cdots ,{x_{n - 1}}} \right]} \\
	&= & - nh\left[ {\left[ {{y_1}, \cdots ,{y_n}} \right],{x_1}, \cdots ,{x_{n - 1}}} \right],
	\end{eqnarray*}	
and for any $1\leq j\leq n$,
	\begin{eqnarray*}
	&&{( - 1)^{j - 1}}(\left[ {h\left[ {{y_j},{x_1}, \cdots ,{x_{n - 1}}} \right],{y_1}, \cdots ,{\hat{y}_j}, \cdots ,{y_{n - 1}}} \right] \\
	&&+\sum\limits_{i = 1, i\neq j}^{n } {\left[ {\left[ {{y_j},{x_1}, \cdots ,{x_{n - 1}}} \right],{y_1}, \cdots ,h{y_i}, \cdots ,\hat{y}_j, \cdots ,{y_{n - 1}}} \right]}) \\
	&= &{( - 1)^{j - 1}}nh\left[ {\left[ {{y_j},{x_1}, \cdots ,{x_{n - 1}}} \right],{y_1}, \cdots ,{\hat{y}_j}, \cdots ,{y_{n - 1}}} \right].
	\end{eqnarray*}

By taking the sum of the above  $n+1$ identities and applying Equation (\ref{(nl)}), we get
\begin{eqnarray*}
		&&- \left[ {h\left[ {{y_1}, \cdots ,{y_n}} \right],{x_1}, \cdots ,{x_{n - 1}}} \right]- \sum\limits_{i = 1}^{n - 1} {\left[ {\left[ {{y_1}, \cdots ,{y_n}} \right],{x_1}, \cdots ,h{x_i}, \cdots ,{x_{n - 1}}} \right]}\\
&&+\sum\limits_{j = 1}^{n} {( - 1)^{j - 1}}(\left[ {h\left[ {{y_j},{x_1}, \cdots ,{x_{n - 1}}} \right],{y_1}, \cdots ,{\hat{y}_j}, \cdots ,{y_{n - 1}}} \right] \\
	&&+\sum\limits_{i = 1, i\neq j}^{n } {\left[ {\left[ {{y_j},{x_1}, \cdots ,{x_{n - 1}}} \right],{y_1}, \cdots ,h{y_i}, \cdots ,\hat{y}_j, \cdots ,{y_{n - 1}}} \right]}) \\
&= &  -nh\left[ {\left[ {{y_1}, \cdots ,{y_n}} \right],{x_1}, \cdots ,{x_{n - 1}}} \right]+\\
&&nh\sum\limits_{j = 1}^{n} {( - 1)^{j - 1}}\left[ {\left[ {{y_j},{x_1}, \cdots ,{x_{n - 1}}} \right],{y_1}, \cdots ,{\hat{y}_j}, \cdots ,{y_{n - 1}}} \right]\\
&=&   0.
	\end{eqnarray*}
	
We denote
\begin{eqnarray*}
 {{A_j}} & := &\sum\limits_{i = 1,i \ne j}^n {{{( - 1)}^{i - 1}}\left[ {\left[ {{y_i},{x_1}, \cdots ,{x_{n - 1}}} \right],{y_1}, \cdots ,h{y_j}, \cdots ,{{\hat y}_i}, \cdots ,{y_n}} \right]}, 1\leq j\leq {n},\\
	{B_i} &:=& \left[ {\left[ {{y_1}, \cdots ,{y_n}} \right],{x_1}, \cdots ,h{x_i}, \cdots ,{x_{n - 1}}} \right], 1\leq i\leq {n-1}.
\end{eqnarray*}
Then the above equation can be rewritten as
\begin{equation}\label{(np2.1)}
	\begin{array}{l}
		\sum\limits_{i = 1}^n {{{( - 1)}^{i - 1}}\left[ {h\left[ {{y_i},{x_1}, \cdots ,{x_{n - 1}}} \right],{y_1}, \cdots ,{{\hat y}_i}, \cdots ,{y_n}} \right]}  - \left[ {h\left[ {{y_1}, \cdots ,{y_n}} \right],{x_1}, \cdots ,{x_{n - 1}}} \right]\\
		+ \sum\limits_{j = 1}^n {{A_j}}  - \sum\limits_{i = 1}^{n - 1} {{B_i}}  = 0.
	\end{array}
\end{equation}

By applying Equation (\ref{(nl)}) to $ {{A_j}}, 1\leq j\leq n  $, we have	
	\begin{eqnarray*}
		{{A_j}} & =& \sum\limits_{i = 1,i \ne j}^n {{{( - 1)}^{i - 1}}\left[ {\left[ {{y_i},{x_1}, \cdots ,{x_{n - 1}}} \right],{y_1}, \cdots ,h{y_j}, \cdots ,{{\hat y}_i}, \cdots ,{y_n}} \right]}\\
		&=& \left[ {\left[ {{y_1}, \cdots ,h{y_j}, \cdots ,{y_n}} \right],{x_1}, \cdots ,{x_{n - 1}}} \right] \\
		&&+ {( - 1)^j}\left[ {\left[ {h{y_j},{x_1}, \cdots ,{x_{n - 1}}} \right],{y_1}, \cdots ,{{\hat y}_j}, \cdots ,{y_n}} \right].
	\end{eqnarray*}
	
Thus, we get
\begin{eqnarray*}
		\sum\limits_{j = 1}^n { {{A_j}} } &=&\sum\limits_{j = 1}^n {\left[ {\left[ {{y_1}, \cdots ,h{y_j}, \cdots ,{y_n}} \right],{x_1}, \cdots ,{x_{n - 1}}} \right]} \\
		&& + \sum\limits_{j = 1}^n {( - 1)^j}{\left[ {\left[ {h{y_j},{x_1}, \cdots ,{x_{n - 1}}} \right],{y_1}, \cdots ,{{\hat y}_j}, \cdots ,{y_n}} \right]}\\
		&=& n\left[ {h\left[ {{y_1},  \cdots ,{y_n}} \right],{x_1}, \cdots ,{x_{n - 1}}} \right]\\
		&& + \sum\limits_{j = 1}^n {( - 1)^j}{\left[ {\left[ {h{y_j},{x_1}, \cdots ,{x_{n - 1}}} \right],{y_1}, \cdots ,{{\hat y}_j}, \cdots ,{y_n}} \right]}.
	\end{eqnarray*}
	
By applying Equation (\ref{(nl)}) to $ {{B_i}}, 1\leq i\leq n-1  $, we have
\begin{eqnarray*}	
&&\left[ {\left[ {{y_1}, \cdots ,{y_n}} \right],{x_1}, \cdots ,h{x_i}, \cdots ,{x_{n - 1}}} \right] \\
&=& \sum\limits_{j = 1}^n {{{( - 1)}^{j - 1}}\left[ {\left[ {{y_j},{x_1}, \cdots ,h{x_i}, \cdots ,{x_{n - 1}}} \right],{y_1}, \cdots {{\hat y}_j}, \cdots ,{y_n}} \right]}.
\end{eqnarray*}	

Thus, we get
	\begin{eqnarray*}
		\sum\limits_{i = 1}^{n - 1} {{B_i}}  &=& \sum\limits_{i = 1}^{n - 1} {\sum\limits_{j = 1}^n {{{( - 1)}^{j - 1}}\left[ {\left[ {{y_j},{x_1}, \cdots ,h{x_i}, \cdots ,{x_{n - 1}}} \right],{y_1}, \cdots {{\hat y}_j}, \cdots ,{y_n}} \right]} } \\
		&= &\sum\limits_{j = 1}^n {\sum\limits_{i = 1}^{n - 1} {{{( - 1)}^{j - 1}}\left[ {\left[ {{y_j},{x_1}, \cdots ,h{x_i}, \cdots ,{x_{n - 1}}} \right],{y_1}, \cdots {{\hat y}_j}, \cdots ,{y_n}} \right]} }.
	\end{eqnarray*}
	
	Note that, by Equation (\ref{(np)}), we have	
	\begin{eqnarray*}
		&&\sum\limits_{i = 1}^{n - 1} {{{( - 1)}^{j-1}}\left[ {\left[ {{y_j},{x_1}, \cdots ,h{x_i}, \cdots ,{x_{n - 1}}} \right],{y_1}, \cdots {{\hat y}_j}, \cdots ,{y_n}} \right]} \\
	&	= &{( - 1)^{j-1}}n\left[ {h\left[ {{y_j},{x_1}, \cdots ,{x_i}, \cdots ,{x_{n - 1}}} \right],{y_1}, \cdots {{\hat y}_j}, \cdots ,{y_n}} \right] \\
		&&+{( - 1)^{j}} \left[ {\left[ {h{y_j},{x_1}, \cdots ,{x_i}, \cdots ,{x_{n - 1}}} \right],{y_1}, \cdots {{\hat y}_j}, \cdots ,{y_n}} \right].
	\end{eqnarray*}
	
Thus, we obtain
	\begin{eqnarray*}
		\sum\limits_{i = 1}^{n - 1} {{B_i}} &= &\sum\limits_{j = 1}^n {{{( - 1)}^{j - 1}}n\left[ {h\left[ {{y_j},{x_1}, \cdots ,{x_i}, \cdots ,{x_{n - 1}}} \right],{y_1}, \cdots {{\hat y}_j}, \cdots ,{y_n}} \right]} \\
	&&	+ \sum\limits_{j = 1}^n {{{( - 1)}^j}\left[ {\left[ {h{y_j},{x_1}, \cdots ,{x_i}, \cdots ,{x_{n - 1}}} \right],{y_1}, \cdots {{\hat y}_j}, \cdots ,{y_n}} \right]}.
\end{eqnarray*}
	
By substituting these into (\ref{(np2.1)}), we have	
	\begin{eqnarray*}
		&&\sum\limits_{i = 1}^n {{{( - 1)}^{i - 1}}\left[ {h\left[ {{y_i},{x_1}, \cdots ,{x_{n - 1}}} \right],{y_1}, \cdots ,{{\hat y}_i}, \cdots ,{y_n}} \right]}  - \left[ {h\left[ {{y_1}, \cdots ,{y_n}} \right],{x_1}, \cdots ,{x_{n - 1}}} \right]\\
        &&+ n\left[ {h\left[ {{y_1}, \cdots ,{y_n}} \right],{x_1}, \cdots ,{x_{n - 1}}} \right] + \sum\limits_{j = 1}^n {( - 1)^j}{\left[ {\left[ {h{y_j},{x_1}, \cdots ,{x_{n - 1}}} \right],{y_1}, \cdots ,{{\hat y}_j}, \cdots ,{y_n}} \right]}\\
		&&- \sum\limits_{j = 1}^n {{{( - 1)}^{j - 1}}n\left[ {h\left[ {{y_j},{x_1}, \cdots ,{x_i}, \cdots ,{x_{n - 1}}} \right],{y_1}, \cdots {{\hat y}_j}, \cdots ,{y_n}} \right]} \\
	&&-\sum\limits_{j = 1}^n {{{( - 1)}^j}\left[ {\left[ {h{y_j},{x_1}, \cdots ,{x_i}, \cdots ,{x_{n - 1}}} \right],{y_1}, \cdots {{\hat y}_j}, \cdots ,{y_n}} \right]} \\
	&=&0,
	\end{eqnarray*}
 which implies
$$(n - 1)(\sum\limits_{i = 1}^n {{{( - 1)}^i}\left[ {h\left[ {{y_i},{x_1}, \cdots ,{x_{n - 1}}} \right],{y_1}, \cdots ,{{\hat y}_i}, \cdots ,{y_n}} \right]}
	+ \left[ {h\left[ {{y_1}, \cdots ,{y_n}} \right],{x_1}, \cdots ,{x_{n - 1}}} \right])=0.$$
	Therefore the proof of  (\ref{(np2)}) is completed.

(3) By Equation (\ref{(np)}), for any $1\leq j\leq n+1$, we have
\begin{eqnarray*}
	 &&{( - 1)^{j - 1}}n\left[ {{y_j},{x_1}, \cdots ,{x_{n - 1}}} \right]\left[ {{y_1}, \cdots ,{{\hat y}_j}, \cdots ,{y_{n + 1}}} \right]\\
	 &=&	\sum\limits_{i = 1,i \ne j}^{n + 1} {{{( - 1)}^{j - 1}}\left[ {{y_1}, \cdots ,{y_i}\left[ {{y_j},{x_1}, \cdots ,{x_{n - 1}}} \right], \cdots ,{{\hat y}_j}, \cdots ,{y_{n + 1}}} \right]}.
	\end{eqnarray*}	
By taking the sum of the above $n+1$ identities, we obtain
	\begin{eqnarray*}
		&&\sum\limits_{j = 1}^{n + 1} {{{( - 1)}^{j - 1}}n\left[ {{y_j},{x_1}, \cdots ,{x_{n - 1}}} \right]\left[ {{y_1}, \cdots ,{{\hat  y}_j}, \cdots ,{y_{n + 1}}} \right]} \\
		&=& \sum\limits_{j = 1}^{n + 1} {\sum\limits_{i = 1,i \ne j}^{n + 1} {{{( - 1)}^{j - 1}}\left[ {{y_1}, \cdots ,{y_i}\left[ {{y_j},{x_1}, \cdots ,{x_{n - 1}}} \right], \cdots ,{{\hat y}_j}, \cdots ,{y_{n + 1}}} \right]} }.
	\end{eqnarray*}
	
Thus we only need to  prove the following equation:
	$$	\sum\limits_{j = 1}^{n + 1} {\sum\limits_{i = 1,i \ne j}^{n + 1} {{{( - 1)}^{j - 1}}\left[ {{y_1}, \cdots ,{y_i}\left[ {{y_j},{x_1}, \cdots ,{x_{n - 1}}} \right], \cdots ,{{\hat y}_j}, \cdots ,{y_{n + 1}}} \right]} } =0.$$
 Note that
	\begin{eqnarray*}
		&&\sum\limits_{j = 1}^{n + 1} {\sum\limits_{i = 1,i \ne j}^{n + 1} {{{( - 1)}^{j - 1}}\left[ {{y_1}, \cdots ,{y_i}\left[ {{y_j},{x_1}, \cdots ,{x_{n - 1}}} \right], \cdots ,{{\hat y}_j}, \cdots ,{y_{n + 1}}} \right]} } \\
		&	= &\sum\limits_{i = 1}^{n + 1} {\sum\limits_{j = 1,j \ne i}^{n + 1} {{{( - 1)}^{j - 1}}\left[ {{y_1}, \cdots ,{y_i}\left[ {{y_j},{x_1}, \cdots ,{x_{n - 1}}} \right], \cdots ,{{\hat y}_j}, \cdots ,{y_{n + 1}}} \right]} } \\
			&=& \sum\limits_{i = 1}^{n + 1} {\sum\limits_{j = 1}^{i - 1} {{{( - 1)}^{i + j - 1}}\left[ {{y_i}\left[ {{y_j},{x_1}, \cdots ,{x_{n - 1}}} \right],{y_1}, \cdots ,{{\hat y}_j}, \cdots ,{{\hat y}_i}, \cdots ,{y_{n + 1}}} \right]} } \\
		&&	+ \sum\limits_{i = 1}^{n + 1} {\sum\limits_{j = i + 1}^{n + 1} {{{( - 1)}^{i + j}}\left[ {{y_i}\left[ {{y_j},{x_1}, \cdots ,{x_{n - 1}}} \right],{y_1}, \cdots ,{{\hat y}_i}, \cdots ,{{\hat y}_j}, \cdots ,{y_{n + 1}}} \right]} } \\
       &\mathop =  \limits^{(\ref{(np2)})}& \sum\limits_{i = 1}^{n + 1} {{{( - 1)}^i}\left[ {{y_i}\left[ {{y_1}, \cdots ,{{\hat y}_i}, \cdots ,{y_{n + 1}}} \right],{x_1}, \cdots ,{x_{n - 1}}} \right]} \\
	&\mathop  = \limits^{(\ref{(np1)})}& 0.
\end{eqnarray*}	
	Hence the conclusion holds.	
	
(4) By applying Equation (\ref{def2.3}), we have
	\begin{eqnarray*}
		&&{n^2}{x_1}{x_2}\left[ {{y_1},{y_2}, \cdots ,{y_n}} \right] = n{x_1}\sum\limits_{j = 1}^n {\left[ {{y_1}, \cdots ,{y_j}{x_2}, \cdots ,{y_n}} \right]} \\
	&=& \sum\limits_{i = 1}^n {\sum\limits_{j = 1,j \ne i}^n {\left[ {{y_1}, \cdots ,{y_i}{x_1}, \cdots ,{y_j}{x_2}, \cdots ,{y_n}} \right]} }  + \sum\limits_{j = 1}^n {\left[ {{y_1}, \cdots ,{y_j}{x_1}{x_2}, \cdots ,{y_n}} \right]}\\
	&= &\sum\limits_{i = 1}^n {\sum\limits_{j = 1,j \ne i}^n {\left[ {{y_1}, \cdots ,{y_i}{x_1}, \cdots ,{y_j}{x_2}, \cdots ,{y_n}} \right]} }  + n{x_1}{x_2}\left[ {{y_1}, \cdots ,{y_n}} \right],
\end{eqnarray*}
which gives
$$n(n - 1){x_1}{x_2}\left[ {{y_1},{y_2}, \cdots ,{y_n}} \right] = \sum\limits_{i = 1}^n {\sum\limits_{j = 1,j \ne i}^n {\left[ {{y_1}, \cdots ,{y_i}{x_1}, \cdots ,{y_j}{x_2}, \cdots ,{y_n}} \right]} } .$$
Hence the proof is completed.	
\end{proof}

To prove Conjecture \ref{con}, we need the following extra condition.
\begin{definition}\label{def2.4}
	A transposed Poisson $n$-Lie algebra $(L, \cdot ,\left[ { - , \cdots , - } \right])$ is called  strong if the following identity holds	
	\begin{equation}\label{(strong)}
		\begin{array}{l}
		{y_1}\left[ {h{y_2},{x_1}, \cdots ,{x_{n - 1}}} \right] - {y_2}\left[ {h{y_1},{x_1}, \cdots ,{x_{n - 1}}} \right] + \sum\limits_{i = 1}^{n - 1} {{{( - 1)}^{i - 1}}h{x_i}\left[ {{y_1},{y_2},{x_1}, \cdots ,{{\hat x}_i}, \cdots ,{x_{n - 1}}} \right]} \\
		 = 0
	\end{array}
	\end{equation}
for any  ${y_1},{y_2}, {x_i}\in L, 1\leq i\leq n - 1.$
\end{definition}
\begin{remark}
 When $n=2$, the identity is $${y_1}\left[ {h{y_2},{x_1}} \right] + {y_2}\left[ {{x_1},h{y_1}} \right] + h{x_1}\left[ {{y_1},{y_2}} \right] = 0$$ which is exactly Theorem 2.5 (11) in \cite{BBGW}. Thus, in the case of a transposed Poisson  algebra, the strong condition holds naturally.
\end{remark}

\begin{proposition}\label{2.8}
	Let $(L, \cdot ,\left[ { - , \cdots , - } \right])$ be a strong transposed Poisson $n$-Lie algebra. Then	
	\begin{equation}\label{(strong1)}
		{y_1}\left[ {h{y_2},{x_1}, \cdots ,{x_{n - 1}}} \right] - h{y_1}\left[ {{y_2},{x_1}, \cdots ,{x_{n - 1}}} \right] = {y_2}\left[ {h{y_1},{x_1}, \cdots ,{x_{n - 1}}} \right] - h{y_2}\left[ {{y_1},{x_1}, \cdots ,{x_{n - 1}}} \right]
	\end{equation}	
for any  ${y_1},{y_2}, {x_i} \in L, 1\leq i\leq n - 1$.	
\end{proposition}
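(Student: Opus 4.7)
The plan is to combine the strong condition (\ref{(strong)}) with the identity (\ref{(np1)}) applied to a specific tuple, after multiplying the latter by $h$. The key observation is that the strong condition expresses $y_1[hy_2,x_1,\ldots,x_{n-1}] - y_2[hy_1,x_1,\ldots,x_{n-1}]$ as $-\sum_{i=1}^{n-1}(-1)^{i-1}hx_i[y_1,y_2,x_1,\ldots,\hat{x}_i,\ldots,x_{n-1}]$, so the target equality will follow if we can show that this same sum equals $-\bigl(hy_1[y_2,x_1,\ldots,x_{n-1}] - hy_2[y_1,x_1,\ldots,x_{n-1}]\bigr)$.

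First I would apply the identity (\ref{(np1)}) to the ordered tuple of $n+1$ elements $(y_1, y_2, x_1, \ldots, x_{n-1})$. Reading off the two leading terms separately (corresponding to removing $y_1$ and $y_2$) and reindexing the remaining $n-1$ terms by $j=i-2$, one obtains
\begin{equation*}
y_1[y_2, x_1, \ldots, x_{n-1}] - y_2[y_1, x_1, \ldots, x_{n-1}] + \sum_{j=1}^{n-1}(-1)^{j-1}x_j[y_1, y_2, x_1, \ldots, \hat{x}_j, \ldots, x_{n-1}] = 0.
\end{equation*}
Next, multiplying this entire identity by $h$ (using commutativity and associativity of $\cdot$) yields
\begin{equation*}
hy_1[y_2, x_1, \ldots, x_{n-1}] - hy_2[y_1, x_1, \ldots, x_{n-1}] + \sum_{j=1}^{n-1}(-1)^{j-1}hx_j[y_1, y_2, x_1, \ldots, \hat{x}_j, \ldots, x_{n-1}] = 0.
\end{equation*}

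Finally, subtracting this multiplied identity from the strong condition (\ref{(strong)}) makes the $hx_j$-summation terms cancel exactly, leaving precisely the asserted symmetry (\ref{(strong1)}) after rearrangement. I do not anticipate a genuine obstacle here: the whole argument is essentially algebraic bookkeeping, and the only thing to verify carefully is the sign matching between the indexing of (\ref{(np1)}) (which starts the alternating sign at $i=1$ on $y_1$) and the indexing in the strong condition (where the alternating sign starts at $i=1$ on $x_1$); the shift $j = i-2$ preserves the parity since $(-1)^{j+1} = (-1)^{j-1}$, so everything lines up.
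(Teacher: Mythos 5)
Your proof is correct and follows exactly the paper's argument: apply identity \eqref{(np1)} to the tuple $(y_1,y_2,x_1,\ldots,x_{n-1})$, multiply by $h$, and subtract from the strong condition \eqref{(strong)} so that the $hx_i$-sums cancel. The sign bookkeeping you checked ($(-1)^{j+1}=(-1)^{j-1}$ under the shift $j=i-2$) is right, and this is precisely how the paper derives \eqref{(strong1)}.
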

\begin{proof}
By Equation (\ref{(np1)}), we have
$$ - h{y_1}\left[ {{y_2},{x_1}, \cdots ,{x_{n - 1}}} \right] + h{y_2}\left[ {{y_1},{x_1}, \cdots ,{x_{n - 1}}} \right] = \sum\limits_{i = 1}^{n - 1} {{{( - 1)}^{i - 1}}h{x_i}\left[ {{y_1},{y_2},{x_1}, \cdots ,{{\hat x}_i}, \cdots ,{x_{n - 1}}} \right]}.$$
Then the statement follows from  Equation (\ref{(strong)}).
\end{proof}

\section{Proof of the conjecture for strong transposed Poisson $n$-Lie algebras}
	
In this section, we will prove Conjecture \ref{con} for strong transposed Poisson $n$-Lie algebras. Fist we recall the  notion of derivations of  transposed Poisson $n$-Lie algebras.
	
\begin{definition}\label{der}
Let $(L, \cdot, \left[ { - , \cdots , - } \right])$ be a transposed Poisson $n$-Lie algebra. The linear operation $D$ is call a derivation of $(L, \cdot, \left[ { - , \cdots , - } \right])$, if
 the following holds for any $u,v, x_{i}\in L, 1\leq i\leq n$:	
 \begin{itemize}
\item[(1)] $D$ is a derivation of $\left({L, \cdot} \right)$ ,i.e., $D\left( {uv} \right) = D\left( u \right)v + uD\left( v \right)$;
\item[(2)] $D$ is a derivation of $(L, \left[ { - , \cdots , - } \right])$, i.e.,
		$$D\left( \left[ {{x_1}, \cdots ,x_n}  \right] \right)= \sum\limits_{i = 1}^n {\left[ {{x_1}, \cdots ,{x_{i - 1}},D\left( {{x_i}} \right),{x_{i + 1}}, \cdots ,{x_n}} \right]}.$$
\end{itemize}
\end{definition}

\begin{lemma}\label{3.3}
	Let $(L, \cdot ,\left[ { - , \cdots , - } \right] )$ be a  transposed Poisson $n$-Lie algebra and $D$ a derivation of $(L, \cdot ,\left[ { - , \cdots , - } \right] )$. For any  $y_i\in L, 1\leq i\leq n+1$, we have
\begin{itemize}
\item[(1)] \begin{align} \label{(3.1)}
 &\sum\limits_{i = 1}^{n + 1} {{{( - 1)}^{i - 1}}D({y_i})} D\left( {\left[ {{y_1}, \cdots ,{{\hat y}_i}, \cdots ,{y_{n + 1}}} \right]} \right)\nonumber\\
			=&\sum\limits_{i = 1}^{n + 1} {\sum\limits_{j = 1,j \ne i}^{n + 1} {{{( - 1)}^{i - 1}}D({y_i})\left[ {{y_1}, \cdots ,D({y_j}), \cdots ,{{\hat y}_i}, \cdots ,{y_{n + 1}}} \right]} };
\end{align}
\item[(2)]
\begin{align}\label{(3.2)}
 &\sum\limits_{i = 1}^{n + 1} {{{( - 1)}^{i - 1}}D({y_i})} D\left( {\left[ {{y_1}, \cdots ,{{\hat y}_i}, \cdots ,{y_{n + 1}}} \right]} \right)\nonumber\\
			= &\sum\limits_{i = 1}^{n + 1} {\sum\limits_{j = 1,j \ne i}^{n + 1} {\sum\limits_{k = j + 1,k \ne i}^{n + 1} {{{( - 1)}^i}{y_i}\left[ {{y_1}, \cdots ,D({y_j}), \cdots ,D({y_k}), \cdots ,{{\hat y}_i}, \cdots ,{y_{n + 1}}} \right]} } },
\end{align}	
\end{itemize}
where for any  $i>j$, $\sum\limits_{i}^{j} $ means the empty sum which is zero.	
\end{lemma}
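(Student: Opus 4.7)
The plan is to prove \eqref{(3.1)} by a direct application of the derivation property of $D$ on the $n$-Lie bracket, and then to derive \eqref{(3.2)} from \eqref{(3.1)} by pairing up terms and applying the alternating identity \eqref{(np1)}.

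For part (1), I would expand each inner bracket using Definition \ref{der}(2), which gives
$$D\bigl(\left[y_1,\ldots,\hat{y}_i,\ldots,y_{n+1}\right]\bigr) = \sum_{j\neq i} \left[y_1,\ldots,D(y_j),\ldots,\hat{y}_i,\ldots,y_{n+1}\right].$$
Multiplying by $(-1)^{i-1}D(y_i)$ and summing over $i$ immediately yields the right-hand side of \eqref{(3.1)}, so this step is essentially a one-line verification.

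For part (2), the key move is to group the double sum on the right-hand side of \eqref{(3.1)} into pairs indexed by unordered pairs $\{i,j\}$ with $i<j$. For each such pair I would apply \eqref{(np1)} to the $(n+1)$-tuple obtained from $(y_1,\ldots,y_{n+1})$ by replacing $y_i$ with $D(y_i)$ and $y_j$ with $D(y_j)$; isolating the two "derivative-outside" terms produces
\begin{align*}
&(-1)^{i-1} D(y_i) \left[y_1,\ldots,\hat{y}_i,\ldots,D(y_j),\ldots,y_{n+1}\right] + (-1)^{j-1} D(y_j) \left[y_1,\ldots,D(y_i),\ldots,\hat{y}_j,\ldots,y_{n+1}\right] \\
&\qquad = \sum_{l\neq i,j} (-1)^l y_l \left[y_1,\ldots,D(y_i),\ldots,D(y_j),\ldots,\hat{y}_l,\ldots,y_{n+1}\right],
\end{align*}
turning the pair of terms with $D(y_i), D(y_j)$ as outer factors into a sum of terms with a plain $y_l$ on the outside and both derivatives inside the bracket.

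Summing this relation over all $i<j$ rewrites the right-hand side of \eqref{(3.1)} as a triple sum over distinct indices $(l,i,j)$ with $i<j$; after relabeling $(l,i,j)\to(i,j,k)$ with $j<k$, the result is exactly the right-hand side of \eqref{(3.2)}. The main obstacle is purely combinatorial bookkeeping: one must carefully track the signs (verifying that $-(-1)^{l-1}=(-1)^l$ matches the $(-1)^i$ appearing in \eqref{(3.2)}) and confirm that the bijection between ordered pairs $(i,j)$ with $i\neq j$ and unordered pairs $\{i,j\}$ accounts for every term of the double sum exactly once.
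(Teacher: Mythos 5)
Your proof is correct, and for part (2) it takes a genuinely shorter route than the paper. Part (1) is the same one-line expansion via Definition \ref{der}. For part (2), the paper works directly from the defining identity \eqref{(np)}: it multiplies both sides by $n$, pushes the outer factors $D(y_i)$ (resp.\ $y_i$) inside the bracket, decomposes the resulting sums into pieces $T_1+T_2$ and $M_1+M_2+M_3$, kills $T_1$ and $M_1$ by antisymmetric pairing, and then matches $T_2$ with $M_2+M_3$ through a lengthy reindexing. You instead apply the already-established identity \eqref{(np1)} to the modified tuple $(y_1,\ldots,D(y_i),\ldots,D(y_j),\ldots,y_{n+1})$, which packages the whole ``multiply by $n$, distribute, cancel the symmetric part'' mechanism into a single invocation: isolating the $l=i$ and $l=j$ terms converts each unordered pair of ``derivative-outside'' summands of \eqref{(3.1)} directly into the corresponding ``$y_l$-outside'' block of \eqref{(3.2)}, and the sign check $-(-1)^{l-1}=(-1)^l$ together with the bijection between ordered pairs $(i,j)$, $i\neq j$, and unordered pairs $\{i,j\}$ is exactly right. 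What your approach buys is economy and transparency --- the combinatorics reduces to a reordering of a sum over triples of distinct indices --- at the cost of leaning on Theorem \ref{2.4}(1) for a general tuple (which is legitimate, since that identity holds for arbitrary elements of $L$); the paper's version is self-contained modulo \eqref{(np)} but substantially more bookkeeping-heavy.
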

\begin{proof}
(1) The statement follows immediately from Definition \ref{der}.

(2) By applying Equation (\ref{(3.1)}),	we need to prove the following equation
	\begin{eqnarray*}
	&	&\sum\limits_{i = 1}^{n + 1} {\sum\limits_{j = 1,j \ne i}^{n + 1} {{{( - 1)}^{i - 1}}nD({y_i})\left[ {{y_1}, \cdots ,D({y_j}), \cdots ,{{\hat y}_i}, \cdots ,{y_{n + 1}}} \right]} } \\
	&	=& \sum\limits_{i = 1}^{n + 1} {\sum\limits_{j = 1,j \ne i}^{n + 1} {\sum\limits_{k = j + 1,k \ne i}^{n + 1} {{{( - 1)}^i}n{y_i}\left[ {{y_1}, \cdots ,D({y_j}), \cdots ,D({y_k}), \cdots ,{{\hat y}_i}, \cdots ,{y_{n + 1}}} \right]} } }.
	\end{eqnarray*}

For any $1\leq i\leq n+1$, denote $A_i := n\sum\limits_{j = 1,j \ne i}^{n + 1} {{{( - 1)}^{i - 1}}D({y_i})\left[ {{y_1}, \cdots ,D({y_j}), \cdots ,{{\hat y}_i}, \cdots ,{y_{n + 1}}} \right]}$.
Then we have
\[\sum\limits_{i = 1}^{n + 1} {\sum\limits_{j = 1,j \ne i}^{n + 1} {{{( - 1)}^{i - 1}}nD({y_i})\left[ {{y_1}, \cdots ,D({y_j}), \cdots ,{{\hat y}_i}, \cdots ,{y_{n + 1}}} \right]} }  = \sum\limits_{i = 1}^{n + 1} { {{A_i}} }.\]	
Note that
	\begin{eqnarray*}
	&&{A_i}
	=n\sum\limits_{j = 1,j \ne i}^{n + 1} {{{( - 1)}^{i - 1}}D({y_i})\left[ {{y_1}, \cdots ,D({y_j}), \cdots ,{{\hat y}_i}, \cdots ,{y_{n + 1}}} \right]} \\
	&=& {( - 1)^{i - 1}}(nD({y_i})\left[ {D({y_1}),{y_2}, \cdots ,{{\hat y}_i}, \cdots ,{y_{n + 1}}} \right] + nD({y_i})\left[ {{y_1},D({y_2}),{y_3}, \cdots ,{{\hat y}_i}, \cdots ,{y_{n + 1}}} \right]\\
	&&+  \cdots  + nD({y_i})\left[ {{y_1}, \cdots ,{{\hat y}_i}, \cdots ,{y_n},D({y_{n + 1}})} \right])\\
	&=& {( - 1)^{i - 1}}(\left[ {D({y_i})D({y_1}),{y_2}, \cdots ,{{\hat y}_i}, \cdots ,{y_{n + 1}}} \right] \\
	&&+ \sum\limits_{k = 2,k \ne i}^{n + 1} {\left[ {D({y_1}),{y_2}, \cdots ,{y_k}D({y_i}), \cdots ,{{\hat y}_i}, \cdots ,{y_{n + 1}}} \right]} \\
	&&+ \left[ {{y_1},D({y_i})D({y_2}),{y_3}, \cdots ,{{\hat y}_i}, \cdots ,{y_{n + 1}}} \right]\\
	&& + \sum\limits_{k = 1,k \ne 2,i}^{n + 1} {\left[ {{y_1},D({y_2}),{y_3}, \cdots ,{y_k}D({y_i}), \cdots ,{{\hat y}_i}, \cdots ,{y_{n + 1}}} \right]} \\
	&&+  \cdots  + \left[ {{y_1}, \cdots ,{{\hat y}_i}, \cdots ,{y_n},D({y_i})D({y_{n + 1}})} \right] \\
	&&+ \sum\limits_{k = 1,k \ne i}^n {\left[ {{y_1}, \cdots ,{y_k}D({y_i}), \cdots ,{{\hat y}_i}, \cdots ,{y_n},D({y_{n + 1}})} \right]} )\\
	&=& {( - 1)^{i - 1}}\sum\limits_{j = 1,j \ne i}^{n + 1} {\left[ {{y_1}, \cdots ,D({y_i})D({y_j}), \cdots ,{{\hat y}_i}, \cdots ,{y_{n + 1}}} \right]} \\
	&&+ {( - 1)^{i - 1}}\sum\limits_{j = 1,j \ne i}^{n + 1} {\sum\limits_{k = 1,k \ne j,i}^{n + 1} {\left[ {{y_1}, \cdots ,D({y_j}), \cdots ,{y_k}D({y_i}), \cdots ,{{\hat y}_i}, \cdots ,{y_{n + 1}}} \right]} }.
	\end{eqnarray*}
	
Thus we  have	
	\begin{eqnarray*}
		\sum\limits_{i = 1}^{n + 1} { {{A_i}} } &= &\sum\limits_{j = 1}^{n + 1} {\sum\limits_{i = 1,i \ne j}^{n + 1} {{{( - 1)}^{j - 1}}\left[ {{y_1}, \cdots ,D({y_j})D({y_i}), \cdots ,{{\hat y}_j}, \cdots ,{y_{n + 1}}} \right]} } \\
		&&+ \sum\limits_{i = 1}^{n + 1} {\sum\limits_{j = 1,j \ne i}^{n + 1} {\sum\limits_{k = 1,k \ne i,j}^{n + 1} {{( - 1)^{i-1}}\left[ {{y_1}, \cdots ,D({y_j}), \cdots ,{y_k}D({y_i}), \cdots ,{{\hat y}_i}, \cdots ,{y_{n + 1}}} \right]} } } \\
		&=& {T_1} + {T_2},
	\end{eqnarray*}
where
\begin{eqnarray*}
		{T_1} &:=&\sum\limits_{j = 1}^{n + 1} {\sum\limits_{i = 1,i \ne j}^{n + 1} {{{( - 1)}^{j - 1}}\left[ {{y_1}, \cdots ,D({y_j})D({y_i}), \cdots ,{{\hat y}_j}, \cdots ,{y_{n + 1}}} \right]} },\\		
		{T_2}& :=&  \sum\limits_{i = 1}^{n + 1} {\sum\limits_{j = 1,j \ne i}^{n + 1} {\sum\limits_{k = 1,k \ne i,j}^{n + 1} {{( - 1)^{i-1}}\left[ {{y_1}, \cdots ,D({y_j}), \cdots ,{y_k}D({y_i}), \cdots ,{{\hat y}_i}, \cdots ,{y_{n + 1}}} \right]} } }. \\
		\end{eqnarray*}	
Note that
	\begin{eqnarray*}
		{T_1} =   \sum\limits_{j,i = 1}^{n + 1} {{B_{ji}}} ,
		\end{eqnarray*}		
where ${B_{ji}} = {( - 1)^{j - 1}}\left[ {{y_1}, \cdots ,D({y_j})D({y_i}), \cdots ,{{\hat y}_j}, \cdots ,{y_{n + 1}}} \right]$ for any $1\leq j\neq i\leq n+1$,  and ${B_{ii}} = 0$
for any $1\leq i\leq n+1$.

For any $1\leq i,j\leq n+1,$ without loss of generality, assume $i < j$, then we have
	\begin{eqnarray*}
	&&{B_{ji}} + {B_{ij}}\\
	&=& {( - 1)^{j - 1}}\left[ {{y_1}, \cdots ,D({y_j})D({y_i}), \cdots ,{{\hat y}_j}, \cdots ,{y_{n + 1}}} \right]\\
	&&+ {( - 1)^{i - 1}}\left[ {{y_1}, \cdots ,{{\hat y}_i}, \cdots ,D({y_i})D({y_j}), \cdots ,{y_{n + 1}}} \right]\\
	&=& {( - 1)^{j - 1}}\left[ {{y_1}, \cdots ,D({y_j})D({y_i}), \cdots ,{{\hat y}_j}, \cdots ,{y_{n + 1}}} \right]\\
	&&+ {( - 1)^{i - 1 + j - i + 1}}\left[ {{y_1}, \cdots ,D({y_j})D({y_i}), \cdots ,{{\hat y}_j}, \cdots ,{y_{n + 1}}} \right]\\
	&=&  0,
	\end{eqnarray*}		
which implies ${T_1} = \sum\limits_{j,i = 1}^{n + 1} {{B_{ji}}}  = 0.$
	
Thus we get
		\begin{eqnarray*}
		 & &\sum\limits_{i = 1}^{n + 1} { {{A_i}} } = {T_2} \\
	    &= &\sum\limits_{i = 1}^{n + 1} {\sum\limits_{j = 1,j \ne i}^{n + 1} {\sum\limits_{k = 1,k \ne i,j}^{n + 1} {{( - 1)^{i-1}}\left[ {{y_1}, \cdots ,D({y_j}), \cdots ,{y_k}D({y_i}), \cdots ,{{\hat y}_i}, \cdots ,{y_{n + 1}}} \right]} } }.
\end{eqnarray*}

We rewrite
\begin{eqnarray*}
			&&\sum\limits_{i = 1}^{n + 1} {\sum\limits_{j = 1,j \ne i}^{n+1} {\sum\limits_{k = j + 1,k \ne i}^{n + 1} {{{( - 1)}^i}n{y_i}\left[ {{y_1}, \cdots ,D({y_j}), \cdots ,D({y_k}), \cdots ,{{\hat y}_i}, \cdots ,{y_{n + 1}}} \right]} } }\\
		&=& \sum\limits_{i = 1}^{n + 1} {\sum\limits_{j = 1,j \ne i}^{n+1} {\sum\limits_{k = j + 1,k \ne i}^{n + 1} {\sum\limits_{t = 1,t \ne j,k,i}^{n + 1} {{{( - 1)}^i}}} } }\\
			&&\cdot\left[ {{y_1}, \cdots ,D({y_j}), \cdots ,D({y_k}), \cdots ,{y_t}{y_i}, \cdots ,{{\hat y}_i}, \cdots ,{y_{n + 1}}} \right]  \\
		&&+ \sum\limits_{i = 1}^{n + 1} {\sum\limits_{j = 1,j \ne i}^{n+1} {\sum\limits_{k = j + 1,k \ne i}^{n + 1} {{{( - 1)}^i}\left[ {{y_1}, \cdots ,{y_i}D({y_j}), \cdots ,D({y_k}), \cdots ,{{\hat y}_i}, \cdots ,{y_{n + 1}}} \right]} } } \\
		&&+ \sum\limits_{i = 1}^{n + 1} {\sum\limits_{j = 1,j \ne i}^{n+1} {\sum\limits_{k = j + 1,k \ne i}^{n + 1} {{{( - 1)}^i}\left[ {{y_1}, \cdots ,D({y_j}), \cdots ,{y_i}D({y_k}), \cdots ,{{\hat y}_i}, \cdots ,{y_{n + 1}}} \right]} } } \\
		&=& {M_1} + {M_2} + {M_3},
	\end{eqnarray*}		
where
\begin{eqnarray*}
		{M_1}& :=& \sum\limits_{i = 1}^{n + 1} {\sum\limits_{j = 1,j \ne i}^{n + 1} {\sum\limits_{k = j + 1,k \ne i}^{n + 1} {\sum\limits_{t = 1,t \ne j,k,i}^{n + 1} {{{( - 1)}^i}}} } }\\
		&&\cdot\left[ {{y_1}, \cdots ,D({y_j}), \cdots ,D({y_k}), \cdots ,{y_t}{y_i}, \cdots ,{{\hat y}_i}, \cdots ,{y_{n + 1}}} \right],  \\
		{M_2}& :=& \sum\limits_{i = 1}^{n + 1} {\sum\limits_{j = 1,j \ne i}^{n + 1} {\sum\limits_{k = j + 1,k \ne i}^{n + 1} {{{( - 1)}^i}\left[ {{y_1}, \cdots ,{y_i}D({y_j}), \cdots ,D({y_k}), \cdots ,{{\hat y}_i}, \cdots ,{y_{n + 1}}} \right]} } },\\
		{M_3} &:=& \sum\limits_{i = 1}^{n + 1} {\sum\limits_{j = 1,j \ne i}^{n + 1} {\sum\limits_{k = j + 1,k \ne i}^{n + 1} {{{( - 1)}^i}\left[ {{y_1}, \cdots ,D({y_j}), \cdots ,{y_i}D({y_k}), \cdots ,{{\hat y}_i}, \cdots ,{y_{n + 1}}} \right]} } }.
	\end{eqnarray*}	
	
Note that
\begin{eqnarray*}
		{M_1}& =& \sum\limits_{i = 1}^{n + 1} {\sum\limits_{j = 1,j \ne i}^{n+1} {\sum\limits_{k = j + 1,k \ne i}^{n + 1} {\sum\limits_{t = 1,t \ne j,k,i}^{n + 1} {{{( - 1)}^i}}} } }\\
		&&\cdot\left[ {{y_1}, \cdots ,D({y_j}), \cdots ,D({y_k}), \cdots ,{y_t}{y_i}, \cdots ,{{\hat y}_i}, \cdots ,{y_{n + 1}}} \right]  \\
		& = &\sum\limits_{i,j,k,t = 1}^{n + 1} {{B_{ijkt}}},
	\end{eqnarray*}	
where
\begin{gather*}
	{B_{ijkt}} =
	\begin{cases}
		0,  {\hspace{5.6cm}}\text{if any two indices are equal or } k<j;
		\\
		{( - 1)^i}\left[ {{y_1}, \cdots ,D({y_j}), \cdots ,D({y_k}), \cdots ,{y_t}{y_i}, \cdots ,{{\hat y}_i}, \cdots ,{y_{n + 1}}} \right],{\hspace{0.5cm}}
		\text{otherwise}.
	\end{cases}
\end{gather*}

For any $1\leq j,k\leq n+1,$ without loss of generality, assume $t < i,$   then we have
\begin{eqnarray*}
	&&{B_{ijkt}} + {B_{tjki}}\\
	&=& {( - 1)^i}\left[ {{y_1}, \cdots ,D({y_j}), \cdots ,D({y_k}), \cdots ,{y_t}{y_i}, \cdots ,{{\hat y}_i}, \cdots ,{y_{n + 1}}} \right]\\
	&&+ {( - 1)^t}\left[ {{y_1}, \cdots ,D({y_j}), \cdots ,D({y_k}), \cdots ,{{\hat y}_t}, \cdots ,{y_t}{y_i}, \cdots ,{y_{n + 1}}} \right]\\
	&=& {( - 1)^i}\left[ {{y_1}, \cdots ,D({y_j}), \cdots ,D({y_k}), \cdots ,{y_t}{y_i}, \cdots ,{{\hat y}_i}, \cdots ,{y_{n + 1}}} \right]\\
	&&+ {( - 1)^{t + i - t - 1}}\left[ {{y_1}, \cdots ,D({y_j}), \cdots ,D({y_k}), \cdots ,{y_t}{y_i}, \cdots ,{{\hat y}_i}, \cdots ,{y_{n + 1}}} \right]\\
	&=& 0,
	\end{eqnarray*}	
 which implies ${M_1} =  0.$

Thus, we get
\begin{eqnarray*}
		&&\sum\limits_{i = 1}^{n + 1} {\sum\limits_{j = 1,j \ne i}^{n + 1} {\sum\limits_{k = j + 1,k \ne i}^{n + 1} {{{( - 1)}^i}n{y_i}\left[ {{y_1}, \cdots ,D({y_j}), \cdots ,D({y_k}), \cdots ,{{\hat y}_i}, \cdots ,{y_{n + 1}}} \right]} } } \\
		&= &{M_2} + {M_3}.
		\end{eqnarray*}
	
Therefore we only need to prove the following equation
\begin{eqnarray*}
		&&{M_2} + {M_3} \\
		&=& \sum\limits_{i = 1}^{n + 1} {\sum\limits_{j = 1,j \ne i}^{n + 1} {\sum\limits_{k = 1,k \ne i,j}^{n + 1} {{{( - 1)}^{i - 1}}\left[ {{y_1}, \cdots ,D({y_j}), \cdots ,{y_k}D({y_i}), \cdots ,{{\hat y}_i}, \cdots ,{y_{n + 1}}} \right]} } }.
\end{eqnarray*}	
	
Fristly we have
\begin{eqnarray*}
	&&	\sum\limits_{j = 1,j \ne i}^{n + 1} {\sum\limits_{k = j + 1,k \ne i}^{n + 1} {{{( - 1)}^i}\left[ {{y_1}, \cdots ,{y_i}D({y_j}), \cdots ,D({y_k}), \cdots ,{{\hat y}_i}, \cdots ,{y_{n + 1}}} \right]} } \\
	&&+ \sum\limits_{j = 1,j \ne i}^{n + 1} {\sum\limits_{k = j + 1,k \ne i}^{n + 1} {{{( - 1)}^i}\left[ {{y_1}, \cdots ,D({y_j}), \cdots ,{y_i}D({y_k}), \cdots ,{{\hat y}_i}, \cdots ,{y_{n + 1}}} \right]} } \\
	&=& \sum\limits_{k = 1,k \ne i}^{n + 1} {\sum\limits_{j = 1,j \ne i}^{k - 1} {{{( - 1)}^i}\left[ {{y_1}, \cdots ,{y_i}D({y_j}), \cdots ,D({y_k}), \cdots ,{{\hat y}_i}, \cdots ,{y_{n + 1}}} \right]} } \\
	&&+ \sum\limits_{j = 1,j \ne i}^{n + 1} {\sum\limits_{k = j + 1,k \ne i}^{n + 1} {{{( - 1)}^i}\left[ {{y_1}, \cdots ,D({y_j}), \cdots ,{y_i}D({y_k}), \cdots ,{{\hat y}_i}, \cdots ,{y_{n + 1}}} \right]} } \\
	&=& \sum\limits_{j = 1,j \ne i}^{n + 1} {\sum\limits_{k = 1,k \ne i}^{j - 1} {{{( - 1)}^i}\left[ {{y_1}, \cdots ,{y_i}D({y_k}), \cdots ,D({y_j}), \cdots ,{{\hat y}_i}, \cdots ,{y_{n + 1}}} \right]} } \\
	&&+ \sum\limits_{j = 1,j \ne i}^{n + 1} {\sum\limits_{k = j + 1,k \ne i}^{n + 1} {{{( - 1)}^i}\left[ {{y_1}, \cdots ,D({y_j}), \cdots ,{y_i}D({y_k}), \cdots ,{{\hat y}_i}, \cdots ,{y_{n + 1}}} \right]} }\\
    &= &\sum\limits_{j = 1,j \ne i}^{n + 1} {\sum\limits_{k = 1,k \ne i,j}^{n + 1} {{{( - 1)}^i}\left[ {{y_1}, \cdots ,D({y_j}), \cdots ,{y_i}D({y_k}), \cdots ,{{\hat y}_i}, \cdots ,{y_{n + 1}}} \right]} } .
\end{eqnarray*}	
	
Thus
\begin{eqnarray*}
		&&M_2+M_3 \\
	&	=& \sum\limits_{i = 1}^{n + 1} {\sum\limits_{j = 1,j \ne i}^{n + 1} {\sum\limits_{k = 1,k \ne i,j}^{n + 1} {{{( - 1)}^i}\left[ {{y_1}, \cdots ,D({y_j}), \cdots ,{y_i}D({y_k}), \cdots ,{{\hat y}_i}, \cdots ,{y_{n + 1}}} \right]} } } \\
 &=& \sum\limits_{j = 1}^{n + 1} {\sum\limits_{i = 1,i \ne j}^{n + 1} {\sum\limits_{k = 1,k \ne i,j}^{n + 1} {{{( - 1)}^i}\left[ {{y_1}, \cdots ,D({y_j}), \cdots ,{y_i}D({y_k}), \cdots ,{{\hat y}_i}, \cdots ,{y_{n + 1}}} \right]} } }\\
 	&=& \sum\limits_{j = 1}^{n + 1} {\sum\limits_{i = 1,i \ne j}^{n + 1} {\sum\limits_{k = 1,k \ne j}^{i - 1} {{{( - 1)}^i}\left[ {{y_1}, \cdots ,D({y_j}), \cdots ,{y_i}D({y_k}), \cdots ,{{\hat y}_i}, \cdots ,{y_{n + 1}}} \right]} } } \\
 	&&+ \sum\limits_{j = 1}^{n + 1} {\sum\limits_{i = 1,i \ne j}^{n + 1} {\sum\limits_{k = i + 1,k \ne j}^{n + 1} {{{( - 1)}^i}\left[ {{y_1}, \cdots ,D({y_j}), \cdots ,{{\hat y}_i}, \cdots ,{y_i}D({y_k}), \cdots ,{y_{n + 1}}} \right]} } } .
\end{eqnarray*}

Note that,  for any $1\leq j\leq n+1,$ 	 we have
\begin{eqnarray*}
  	&&\sum\limits_{i = 1,i \ne j}^{n + 1} {\sum\limits_{k = 1,k \ne j}^{i - 1} {{{( - 1)}^i}\left[ {{y_1}, \cdots ,D({y_j}), \cdots ,{y_i}D({y_k}), \cdots ,{{\hat y}_i}, \cdots ,{y_{n + 1}}} \right]} } \\
  	&=& \sum\limits_{i = 1,i \ne j}^{n + 1} {\sum\limits_{k = 1,k \ne j}^{i - 1} {{{( - 1)}^i}\left[ {{y_1}, \cdots ,D({y_j}), \cdots ,{y_{k - 1}},{y_i}D({y_k}),{y_{k + 1}}, \cdots ,{{\hat y}_i} \cdots ,{y_{n + 1}}} \right]} } \\
  	&=& \sum\limits_{i = 1,i \ne j}^{n + 1} {\sum\limits_{k = 1,k \ne j}^{i - 1} {{{( - 1)}^{k - 1}}\left[ {{y_1}, \cdots ,D({y_j}), \cdots ,{{\hat y}_k}, \cdots ,{y_{i - 1}},{y_i}D({y_k}),{y_{i + 1}}, \cdots ,{y_{n + 1}}} \right]} } \\
  	&=& \sum\limits_{i = 1,i \ne j}^{n + 1} {\sum\limits_{k = 1,k \ne j}^{i - 1} {{{( - 1)}^{k - 1}}\left[ {{y_1}, \cdots ,D({y_j}), \cdots ,{{\hat y}_k}, \cdots ,{y_i}D({y_k}), \cdots ,{y_{n + 1}}} \right]} }.
\end{eqnarray*}

Similarly, we have
\begin{eqnarray*}
 	&&\sum\limits_{i = 1,i \ne j}^{n + 1} {\sum\limits_{k = i + 1,k \ne j}^{n + 1} {{{( - 1)}^i}\left[ {{y_1}, \cdots ,D({y_j}), \cdots ,{{\hat y}_i}, \cdots ,{y_i}D({y_k}), \cdots ,{y_{n + 1}}} \right]} } \\
 	&=& \sum\limits_{i = 1,i \ne j}^{n + 1} {\sum\limits_{k = i + 1,k \ne j}^{n + 1} {{{( - 1)}^{k-1}}\left[ {{y_1}, \cdots ,D({y_j}), \cdots ,{y_i}D({y_k}), \cdots ,{{\hat y}_k}, \cdots ,{y_{n + 1}}} \right]} }.
\end{eqnarray*}

Thus
\begin{eqnarray*}
	&&{M_2} + {M_3}\\
	&=& \sum\limits_{j = 1}^{n + 1} {\sum\limits_{i = 1,i \ne j}^{n + 1} {\sum\limits_{k = 1,k \ne j}^{i - 1} {{{( - 1)}^{k - 1}}\left[ {{y_1}, \cdots ,D({y_j}), \cdots ,{{\hat y}_k}, \cdots ,{y_i}D({y_k}), \cdots ,{y_{n + 1}}} \right]} } } \\
	&&+ \sum\limits_{j = 1}^{n + 1} {\sum\limits_{i = 1,i \ne j}^{n + 1} {\sum\limits_{k = i + 1,k \ne j}^{n + 1} {{{( - 1)}^{k-1}}\left[ {{y_1}, \cdots ,D({y_j}), \cdots ,{y_i}D({y_k}), \cdots ,{{\hat y}_k}, \cdots ,{y_{n + 1}}} \right]} } } \\
	&=& \sum\limits_{j = 1}^{n + 1} {\sum\limits_{i = 1,i \ne j}^{n + 1} {\sum\limits_{k = 1,k \ne i,j}^{n + 1} {{{( - 1)}^{k - 1}}\left[ {{y_1}, \cdots ,D({y_j}), \cdots ,{y_i}D({y_k}), \cdots ,{{\hat y}_k}, \cdots ,{y_{n + 1}}} \right]} } }\\
	&=& \sum\limits_{k = 1}^{n + 1} {\sum\limits_{j = 1,j \ne k}^{n + 1} {\sum\limits_{i = 1,i \ne j,k}^{n + 1} {{{( - 1)}^{k - 1}}\left[ {{y_1}, \cdots ,D({y_j}), \cdots ,{y_i}D({y_k}), \cdots ,{{\hat y}_k}, \cdots ,{y_{n + 1}}} \right]} } }\\
&=&\sum\limits_{i = 1}^{n + 1} {\sum\limits_{j = 1,j \ne i}^{n + 1} {\sum\limits_{k = 1,k \ne j,i}^{n + 1} {{{( - 1)}^{i - 1}}\left[ {{y_1}, \cdots ,D({y_j}), \cdots ,{y_k}D({y_i}), \cdots ,{{\hat y}_i}, \cdots ,{y_{n + 1}}} \right]} } } .
\end{eqnarray*}
The proof is completed.
\end{proof}

\begin{theorem}\label{lie3.4}
	Let $(L, \cdot ,\left[ { - , \cdots , - } \right])$ be a  strong transposed Poisson $n$-Lie algebra and  $D$  a derivation of
	$(L, \cdot ,\left[ { - , \cdots , - } \right])$. Define a linear operation on $L$:
	\begin{equation}\label{(3.3)}
		{\mu _{n + 1}}({x_1}, \cdots ,{x_{n + 1}}): = \sum\limits_{i = 1}^{n + 1} {{{( - 1)}^{i - 1}}}D({x_i})[ {x_1}, \cdots ,{{\hat x}_i}, \cdots ,{x_{n + 1}}]
	\end{equation}
for any ${x_i}\in L, 1\leq i\leq n+1.$
	Then $(L,\mu_{n+1} )$ is an $(n+1)$-Lie algebra.
\end{theorem}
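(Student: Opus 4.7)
The plan is to verify that $(L,\mu_{n+1})$ satisfies the two axioms of an $(n+1)$-Lie algebra: skew-symmetry of $\mu_{n+1}$ and the fundamental identity \eqref{(nl)} of arity $n+1$. Skew-symmetry is essentially immediate from the definition \eqref{(3.3)}: swapping two adjacent arguments $x_j$ and $x_{j+1}$ produces, for each summand with $i\notin\{j,j+1\}$, a sign change from the skew-symmetry of the inner $n$-bracket; and the two summands with $i\in\{j,j+1\}$ are interchanged with an overall sign from the alternating factor $(-1)^{i-1}$. A routine check extends this to arbitrary transpositions.

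The core of the proof is the fundamental identity
\begin{equation*}
\mu_{n+1}(\mu_{n+1}(y_1,\cdots,y_{n+1}),x_1,\cdots,x_n) = \sum_{i=1}^{n+1}(-1)^{i-1}\mu_{n+1}(\mu_{n+1}(y_i,x_1,\cdots,x_n),y_1,\cdots,\hat{y}_i,\cdots,y_{n+1}).
\end{equation*}
Set $Z=\mu_{n+1}(y_1,\cdots,y_{n+1})$. Expanding the left-hand side by \eqref{(3.3)} yields $D(Z)[x_1,\cdots,x_n] + \sum_{k=1}^n (-1)^k D(x_k)[Z,x_1,\cdots,\hat{x}_k,\cdots,x_n]$. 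Using the derivation property of $D$ from Definition \ref{der}, the first summand splits into a ``double-$D$'' part $\sum_j(-1)^{j-1}D^2(y_j)[y_1,\cdots,\hat{y}_j,\cdots,y_{n+1}][x_1,\cdots,x_n]$ and a ``mixed-$D$'' part involving two single factors $D(y_j),D(y_k)$ with $j\neq k$. For the second summand, identity \eqref{(np2)} is applied to pull each $D(y_j)$ out of its inner bracket. The right-hand side is expanded similarly and sorts into three classes: monomials containing $D^2(y_i)$; monomials containing $D(y_i)D(y_j)$ with $i\neq j$; and monomials containing some $D(x_k)$.

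Matching the two sides class by class, the $D^2$-terms cancel after one application of bracket skew-symmetry. The $D(y_i)D(y_j)$-block on the right is converted by Lemma \ref{3.3}(2) into a sum of $y_i[\cdots,D(y_j),\cdots,D(y_k),\cdots]$-terms, which are then identified with the corresponding contributions on the left via identities \eqref{(np1)}, \eqref{(np3)}, and \eqref{(np4)}. The residue consists of pairs of the form $y_a[D(y_b)[\cdots],x_1,\cdots,x_{n-1}]$ and $D(y_a)[y_b[\cdots],x_1,\cdots,x_{n-1}]$, and these are killed by the rewritten strong condition \eqref{(strong1)} of Proposition \ref{2.8}, one pair per unordered $\{a,b\}\subset\{1,\cdots,n+1\}$.

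The main obstacle is the combinatorial bookkeeping of signs and indices: each side produces a large number of monomials, and each of the identities \eqref{(np1)}--\eqref{(np4)} and \eqref{(strong)}--\eqref{(strong1)} applies only after a careful relabelling into the required canonical form. The pairing technique used throughout the proof of Lemma \ref{3.3}, where one verifies cancellation pair-by-pair as in $B_{ji}+B_{ij}=0$, is to be invoked repeatedly. The strong condition enters in one specific place, precisely to eliminate the last residual terms after all of the transposed Poisson identities have been exhausted, which is the reason Conjecture \ref{con} is established here only under the strong hypothesis.
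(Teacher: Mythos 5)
Your plan follows the paper's own proof essentially step for step: expand both sides of the fundamental identity via \eqref{(3.3)} and the derivation property, convert the $D(\text{bracket})$ factors with Lemma \ref{3.3}, sort the resulting monomials into $D^2$-, mixed-$D$-, and $D(x)$-classes, annihilate the purely cancelling blocks with \eqref{(np1)} and \eqref{(np3)}, collapse the iterated-bracket blocks with \eqref{(np2)}, and absorb the final residual pairs with the strong condition via \eqref{(strong1)} --- which is exactly where the paper also needs the strong hypothesis. The only cosmetic discrepancy is that \eqref{(np4)} plays no role in the paper's argument (only \eqref{(np1)}--\eqref{(np3)} are used), which does not affect the soundness of your outline.
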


\begin{proof}	
For convenience, we denote
	$${\mu _{n + 1}}\left( {{x_1}, \cdots ,{x_{n + 1}}} \right): = \left[ {{x_1}, \cdots ,{x_{n + 1}}} \right].$$

On the one hand, we have
\begin{eqnarray*}
	&&	{\left[ {\left[ {{y_1}, \cdots ,{y_{n + 1}}} \right],{x_1}, \cdots ,{x_n}} \right]}\\
		&\mathop  = \limits^{(\ref{(3.3)})}& \sum\limits_{i = 1}^{n + 1} {{{( - 1)}^{i - 1}}\left[ {D({y_i})\left[ {{y_1}, \cdots ,{{\hat y}_i}, \cdots ,{y_{n + 1}}} \right],{x_1}, \cdots ,{x_n}} \right]} \\
		&\mathop  =\limits^{(\ref{(3.3)})}& \sum\limits_{i = 1}^{n + 1} {{{( - 1)}^{i - 1}}D(D({y_i})\left[ {{y_1}, \cdots ,{{\hat y}_i}, \cdots ,{y_{n + 1}}} \right])\left[ {{x_1}, \cdots ,{x_n}} \right]} \\
		&&{ + \sum\limits_{i = 1}^{n + 1} {\sum\limits_{j = 1}^n {{{( - 1)}^{i + j - 1}}D({x_j})} } \left[ {D({y_i})\left[ {{y_1}, \cdots ,{{\hat y}_i}, \cdots ,{y_{n + 1}}} \right],{x_1}, \cdots ,{{\hat x}_j}, \cdots ,{x_n}} \right]}\\
		&= &\sum\limits_{i = 1}^{n + 1} {{{( - 1)}^{i - 1}}{D^2}({y_i})\left[ {{y_1}, \cdots ,{{\hat y}_i}, \cdots ,{y_{n + 1}}} \right]\left[ {{x_1}, \cdots ,{x_n}} \right]} \\
		&&+ \sum\limits_{i = 1}^{n + 1} {{{( - 1)}^{i - 1}}D({y_i})D(\left[ {{y_1}, \cdots ,{{\hat y}_i}, \cdots ,{y_{n + 1}}} \right])\left[ {{x_1}, \cdots ,{x_n}} \right]}\\
		&&+ \sum\limits_{i = 1}^{n + 1} {\sum\limits_{j = 1}^n {{{( - 1)}^{i + j - 1}}D({x_j})} } \left[ {D({y_i})\left[ {{y_1}, \cdots ,{{\hat y}_i}, \cdots ,{y_{n + 1}}} \right],{x_1}, \cdots ,{{\hat x}_j}, \cdots ,{x_n}} \right]\\
		&\mathop  = \limits^{(\ref{(3.1)})}& \sum\limits_{i = 1}^{n + 1} {{{( - 1)}^{i - 1}}{D^2}({y_i})\left[ {{y_1}, \cdots ,{{\hat y}_i}, \cdots ,{y_{n + 1}}} \right]\left[ {{x_1}, \cdots ,{x_n}} \right]}\\
		&&+ \sum\limits_{i = 1}^{n + 1} {\sum\limits_{k = 1,k \ne i}^{n + 1} {{{( - 1)}^{i - 1}}D({y_i})\left[ {{y_1}, \cdots ,D({y_k}), \cdots ,{{\hat y}_i}, \cdots ,{y_{n + 1}}} \right]\left[ {{x_1}, \cdots ,{x_n}} \right]} } \\
		&&+ \sum\limits_{i = 1}^{n + 1} {\sum\limits_{j = 1}^n {{{( - 1)}^{i + j - 1}}D({x_j})} } \left[ {D({y_i})\left[ {{y_1}, \cdots ,{{\hat y}_i}, \cdots ,{y_{n + 1}}} \right],{x_1}, \cdots ,{{\hat x}_j}, \cdots ,{x_n}} \right]\\	
		&=& \sum\limits_{i = 1}^{n + 1} {{{( - 1)}^{i - 1}}{D^2}({y_i})\left[ {{y_1}, \cdots ,{{\hat y}_i}, \cdots ,{y_{n + 1}}} \right]\left[ {{x_1}, \cdots ,{x_n}} \right]} \\
		&&+ \sum\limits_{k = 1}^{n + 1} {\sum\limits_{i = 1}^{k - 1} {{{( - 1)}^{k + i - 1}}D({y_i})\left[ {D({y_k}),{y_1}, \cdots ,{{\hat y}_i}, \cdots ,{{\hat y}_k}, \cdots ,{y_{n + 1}}} \right]\left[ {{x_1}, \cdots ,{x_n}} \right]} }\\
		&&+ \sum\limits_{k = 1}^{n + 1} {\sum\limits_{i = k + 1}^{n + 1} {{{( - 1)}^{i + k}}D({y_i})\left[ {D({y_k}),{y_1}, \cdots ,{{\hat y}_k}, \cdots ,{{\hat y}_i}, \cdots ,{y_{n + 1}}} \right]\left[ {{x_1}, \cdots ,{x_n}} \right]} } \\
		&&+ \sum\limits_{i = 1}^{n + 1} {\sum\limits_{j = 1}^n {{{( - 1)}^{i + j - 1}}D({x_j})} } \left[ {D({y_i})\left[ {{y_1}, \cdots ,{{\hat y}_i}, \cdots ,{y_{n + 1}}} \right],{x_1}, \cdots ,{{\hat x}_j}, \cdots ,{x_n}} \right].
	\end{eqnarray*}

On the other hand, for any $1\leq k\leq n$, we have
\begin{eqnarray*}
	&&{( - 1)^{k - 1}}\left[ {\left[ {{y_k},{x_1}, \cdots ,{x_n}} \right],{y_1}, \cdots ,{{\hat y}_k}, \cdots ,{y_{n + 1}}} \right]\\
	&\mathop  = \limits^{(\ref{(3.3)})}& {( - 1)^{k - 1}}\left[ {D({y_k})\left[ {{x_1}, \cdots ,{x_n}} \right],{y_1}, \cdots ,{{\hat y}_k}, \cdots ,{y_{n + 1}}} \right]\\
	&&+ \sum\limits_{j = 1}^n {{{( - 1)}^{j + k - 1}}\left[ {D({x_j})\left[ {{y_k},{x_1}, \cdots ,{{\hat x}_j}, \cdots ,{x_n}} \right],{y_1}, \cdots ,{{\hat y}_k}, \cdots ,{y_{n + 1}}} \right]} \\
	&\mathop  = \limits^{(\ref{(3.3)})}& {( - 1)^{k - 1}}D(D({y_k})\left[ {{x_1}, \cdots ,{x_n}} \right])\left[ {{y_1}, \cdots ,{{\hat y}_k}, \cdots ,{y_{n + 1}}} \right]\\
	&&+ \sum\limits_{i = 1}^{k - 1} {{{( - 1)}^{i + k - 1}}D({y_i})\left[ {D({y_k})\left[ {{x_1}, \cdots ,{x_n}} \right],{y_1}, \cdots ,{{\hat y}_i}, \cdots ,{{\hat y}_k}, \cdots ,{y_{n + 1}}} \right]} \\
    &&	+ \sum\limits_{i = k + 1}^{n + 1} {{{( - 1)}^{i + k}}D({y_i})\left[ {D({y_k})\left[ {{x_1}, \cdots ,{x_n}} \right],{y_1}, \cdots ,{{\hat y}_k}, \cdots ,{{\hat y}_i}, \cdots ,{y_{n + 1}}} \right]} \\
	&&+ \sum\limits_{j = 1}^n {{{( - 1)}^{j + k - 1}}D(D({x_j})\left[ {{y_k},{x_1}, \cdots ,{{\hat x}_j}, \cdots ,{x_n}} \right])\left[ {{y_1}, \cdots ,{{\hat y}_k}, \cdots ,{y_{n + 1}}} \right]} \\
	&& +\sum\limits_{j = 1}^n {\sum\limits_{i = k + 1}^{n + 1} {{{(( - 1)}^{ i + j}}D({y_i}) } }\\
	&&\cdot\left[ {D({x_j})\left[ {{y_k},{x_1}, \cdots ,{{\hat x}_j}, \cdots ,{x_n}} \right],{y_1}, \cdots ,{{\hat y}_k}, \cdots ,{{\hat y}_i}, \cdots ,{y_{n + 1}}} \right] )\\
    &&+ \sum\limits_{j = 1}^n {\sum\limits_{i = 1}^{k - 1} {{{(( - 1)}^{i + j - 1}}D({y_i})} }\\
    &&\cdot\left[ {D({x_j})\left[ {{y_k},{x_1}, \cdots ,{{\hat x}_j}, \cdots ,{x_n}} \right],{y_1}, \cdots ,{{\hat y}_i}, \cdots ,{{\hat y}_k}, \cdots ,{y_{n + 1}}} \right])\\
	&=& {( - 1)^{k - 1}}{D^2}({y_k})\left[ {{x_1}, \cdots ,{x_n}} \right]\left[ {{y_1}, \cdots ,{{\hat y}_k}, \cdots ,{y_{n + 1}}} \right]\\
	&&+ \sum\limits_{i = 1}^{k - 1} {{{( - 1)}^{i + k - 1}}D({y_i})\left[ {D({y_k})\left[ {{x_1}, \cdots ,{x_n}} \right],{y_1}, \cdots ,{{\hat y}_i}, \cdots ,{{\hat y}_k}, \cdots ,{y_{n + 1}}} \right]} \\	
	&&+ \sum\limits_{i = k + 1}^{n + 1} {{{( - 1)}^{i + k}}D({y_i})\left[ {D({y_k})\left[ {{x_1}, \cdots ,{x_n}} \right],{y_1}, \cdots ,{{\hat y}_k}, \cdots ,{{\hat y}_i}, \cdots ,{y_{n + 1}}} \right]} \\
	&&+ \sum\limits_{j = 1}^n {{{( - 1)}^{j + k - 1}}{D^2}({x_j})\left[ {{y_k},{x_1}, \cdots ,{{\hat x}_j}, \cdots ,{x_n}} \right]\left[ {{y_1}, \cdots ,{{\hat y}_k}, \cdots ,{y_{n + 1}}} \right]} \\
	&&+ \sum\limits_{j = 1}^n {\sum\limits_{i = k + 1}^{n + 1} {{{(( - 1)}^{ i + j}}D({y_i}) } }\\
	&&\cdot\left[ {D({x_j})\left[ {{y_k},{x_1}, \cdots ,{{\hat x}_j}, \cdots ,{x_n}} \right],{y_1}, \cdots ,{{\hat y}_k}, \cdots ,{{\hat y}_i}, \cdots ,{y_{n + 1}}} \right] )\\
	&&+\sum\limits_{j = 1}^n {\sum\limits_{i = 1}^{k - 1} {{{(( - 1)}^{ i + j - 1}}D({y_i}) } }\\
	&&\cdot\left[ {D({x_j})\left[ {{y_k},{x_1}, \cdots ,{{\hat x}_j}, \cdots ,{x_n}} \right],{y_1}, \cdots ,{{\hat y}_i}, \cdots ,{{\hat y}_k}, \cdots ,{y_{n + 1}}} \right])\\
	&&+ {( - 1)^{k - 1}}D({y_k})D(\left[ {{x_1}, \cdots ,{x_n}} \right])\left[ {{y_1}, \cdots ,{{\hat y}_k}, \cdots ,{y_{n + 1}}} \right]\\
	&&+ \sum\limits_{j = 1}^n {{{( - 1)}^{j + k - 1}}D({x_j})D(\left[ {{y_k},{x_1}, \cdots ,{{\hat x}_j}, \cdots ,{x_n}} \right])\left[ {{y_1}, \cdots ,{{\hat y}_k}, \cdots ,{y_{n + 1}}} \right]} \\
	&\mathop  = \limits^{(\ref{(3.2)})}& {( - 1)^{k - 1}}{D^2}({y_k})\left[ {{x_1}, \cdots ,{x_n}} \right]\left[ {{y_1}, \cdots ,{{\hat y}_k}, \cdots ,{y_{n + 1}}} \right]\\
		\end{eqnarray*}
\begin{eqnarray*}
	&&+ \sum\limits_{i = 1}^{k - 1} {{{( - 1)}^{i + k - 1}}D({y_i})\left[ {D({y_k})\left[ {{x_1}, \cdots ,{x_n}} \right],{y_1}, \cdots ,{{\hat y}_i}, \cdots ,{{\hat y}_k}, \cdots ,{y_{n + 1}}} \right]} \\
	&&+ \sum\limits_{i = k + 1}^{n + 1} {{{( - 1)}^{i + k}}D({y_i})\left[ {D({y_k})\left[ {{x_1}, \cdots ,{x_n}} \right],{y_1}, \cdots ,{{\hat y}_k}, \cdots ,{{\hat y}_i}, \cdots ,{y_{n + 1}}} \right]} \\
	&&+ \sum\limits_{j = 1}^n {{{( - 1)}^{j + k - 1}}{D^2}({x_j})\left[ {{y_k},{x_1}, \cdots ,{{\hat x}_j}, \cdots ,{x_n}} \right]\left[ {{y_1}, \cdots ,{{\hat y}_k}, \cdots ,{y_{n + 1}}} \right]} \\
	&& +\sum\limits_{j = 1}^n {\sum\limits_{i = k + 1}^{n + 1} {{{(( - 1)}^{ i + j}}D({y_i}) } }\\
	&&\cdot\left[ {D({x_j})\left[ {{y_k},{x_1}, \cdots ,{{\hat x}_j}, \cdots ,{x_n}} \right],{y_1}, \cdots ,{{\hat y}_k}, \cdots ,{{\hat y}_i}, \cdots ,{y_{n + 1}}} \right] )\\
	&&+ \sum\limits_{j = 1}^n {\sum\limits_{i = 1}^{k - 1} {{{(( - 1)}^{ i + j - 1}}D({y_i}) } }\\
	&&\cdot\left[ {D({x_j})\left[ {{y_k},{x_1}, \cdots ,{{\hat x}_j}, \cdots ,{x_n}} \right],{y_1}, \cdots ,{{\hat y}_i}, \cdots ,{{\hat y}_k}, \cdots ,{y_{n + 1}}} \right])\\
	&&+ \sum\limits_{j = 1}^n {\sum\limits_{t = j + 1}^n {{{( - 1)}^k}{y_k}\left[ {{x_1}, \cdots ,D({x_j}), \cdots ,D({x_t}), \cdots ,{x_n}} \right]\left[ {{y_1}, \cdots ,{{\hat y}_k}, \cdots ,{y_{n + 1}}} \right]} } \\
	&&+ \sum\limits_{i = 1}^n {\sum\limits_{j = 1,j \ne i}^n {{{( - 1)}^{k + i}}{x_i}\left[ {D({y_k}),{x_1}, \cdots ,D({x_j}), \cdots ,{{\hat x}_i}, \cdots ,{x_n}} \right]\left[ {{y_1}, \cdots ,{{\hat y}_k}, \cdots ,{y_{n + 1}}} \right]} } \\	
	&&+ \sum\limits_{i = 1}^n {\sum\limits_{j = 1,j \ne i}^n {\sum\limits_{t = j + 1,t \ne i}^n {{{( - 1)}^{k + i}}{x_i}\left[ {{y_k},{x_1}, \cdots ,D({x_j}),D({x_t}), \cdots ,{{\hat x}_i}, \cdots ,{x_n}} \right]} } } \\
	&&\cdot \left[ {{y_1}, \cdots ,{{\hat y}_k}, \cdots ,{y_{n + 1}}} \right].
\end{eqnarray*}
	
We denote
	$$
		\sum\limits_{i = 1}^{n + 1} {{{( - 1)}^{i - 1}}\left[ {\left[ {{y_i},{x_1}, \cdots ,{x_n}} \right],{y_1}, \cdots ,{{\hat y}_i}, \cdots ,{y_{n + 1}}} \right]} = \sum\limits_{i = 1}^7 {{A_i}},
	$$	
	where	
\begin{eqnarray*}
	{A_1} &:=& \sum\limits_{i = 1}^{n + 1} {{{( - 1)}^{i - 1}}{D^2}({y_i})\left[ {{x_1}, \cdots ,{x_n}} \right]\left[ {{y_1}, \cdots ,{{\hat y}_i}, \cdots ,{y_{n + 1}}} \right]} \\
	{A_2} &:=& \sum\limits_{k = 1}^{n + 1} {\sum\limits_{j = 1}^n {{{( - 1)}^{k + j - 1}}} {D^2}({x_j})\left[ {{y_k},{x_1}, \cdots ,{{\hat x}_j}, \cdots ,{x_n}} \right]\left[ {{y_1}, \cdots ,{{\hat y}_k}, \cdots ,{y_{n + 1}}} \right]}\\
	{A_3} &:=& \sum\limits_{i = 1}^{n + 1} {\sum\limits_{j = 1}^n {\sum\limits_{k = j + 1}^n {{{( - 1)}^i}{y_i}\left[ {{x_1}, \cdots ,D({x_j}), \cdots ,D({x_k}), \cdots ,{x_n}} \right]\left[ {{y_1}, \cdots ,{{\hat y}_i}, \cdots ,{y_{n + 1}}} \right]} } } \\
		\end{eqnarray*}

\begin{eqnarray*}
		{A_4} &:= &\sum\limits_{k = 1}^{n + 1} {\sum\limits_{i = 1}^n {\sum\limits_{j = 1,j \ne i}^n {\sum\limits_{t = j + 1,t \ne i}^n {({{( - 1)}^{k + i}}{x_i}\left[ {{y_k},{x_1}, \cdots ,D({x_j}),D({x_t}), \cdots ,{{\hat x}_i}, \cdots ,{x_n}} \right]} } } }  \\
	&&	\cdot \left[ {{y_1}, \cdots ,{{\hat y}_k}, \cdots ,{y_{n + 1}}} \right])\\
		{A_5} &:=& \sum\limits_{k = 1}^{n + 1} {\sum\limits_{i = 1}^{k - 1} {{{( - 1)}^{k + i - 1}}D({y_i})\left[ {D({y_k})\left[ {{x_1}, \cdots ,{x_n}} \right],{y_1}, \cdots ,{{\hat y}_i}, \cdots ,{{\hat y}_k}, \cdots ,{y_{n + 1}}} \right]} } \\
		&&+ \sum\limits_{k = 1}^{n + 1} {\sum\limits_{i = k + 1}^{n + 1} {{{( - 1)}^{i + k}}D({y_i})\left[ {D({y_k})\left[ {{x_1}, \cdots ,{x_n}} \right],{y_1}, \cdots ,{{\hat y}_k}, \cdots ,{{\hat y}_i}, \cdots ,{y_{n + 1}}} \right]} }\\
		{A_6}&:=& \sum\limits_{k = 1}^{n + 1} {\sum\limits_{i = 1}^n {\sum\limits_{j = 1,j \ne i}^n {{{(( - 1)}^{k + i}}{x_i}\left[ {D({y_k}),{x_1}, \cdots ,D({x_j}), \cdots ,{{\hat x}_i}, \cdots ,{x_n}} \right]} } } \\
		&&	\cdot\left[ {{y_1}, \cdots ,{{\hat y}_k}, \cdots ,{y_{n + 1}}} \right] )\\
		{A_7}&:=& \sum\limits_{k = 1}^{n + 1} {\sum\limits_{j = 1}^n {\sum\limits_{i = k + 1}^{n + 1} {{{(( - 1)}^{k + i + j}}D({y_i})} } }\\
		&&\cdot\left[ {D({x_j})\left[ {{y_k},{x_1}, \cdots ,{{\hat x}_j}, \cdots ,{x_n}} \right],{y_1}, \cdots ,{{\hat y}_k}, \cdots ,{{\hat y}_i}, \cdots ,{y_{n + 1}}} \right] )\\
		&&+ \sum\limits_{k = 1}^{n + 1} {\sum\limits_{j = 1}^n {\sum\limits_{i = 1}^{k - 1} {{{(( - 1)}^{k + i + j - 1}}D({y_i})} } }\\
		&&\cdot\left[ {D({x_j})\left[ {{y_k},{x_1}, \cdots ,{{\hat x}_j}, \cdots ,{x_n}} \right],{y_1}, \cdots ,{{\hat y}_i}, \cdots ,{{\hat y}_k}, \cdots ,{y_{n + 1}}} \right]) .
\end{eqnarray*}	

By Equation (\ref{(np3)}), for fixed $j$, we have
\[\sum\limits_{k = 1}^{n + 1} {{{( - 1)}^{k + j - 1}}{D^2}({x_j})\left[ {{y_k},{x_1}, \cdots ,{{\hat x}_j}, \cdots ,{x_n}} \right]\left[ {{y_1}, \cdots ,{{\hat y}_k}, \cdots ,{y_{n + 1}}} \right]}  = 0.\]
So  we obtain  $ {{A_2}} =0.$
	
By Equation (\ref{(np1)}), for fixed $j$ and $k$, we have	
	\[\sum\limits_{i = 1}^{n + 1} {{{( - 1)}^i}{y_i}\left[ {{x_1}, \cdots ,D({x_j}), \cdots ,D({x_k}), \cdots ,{x_n}} \right]\left[ {{y_1}, \cdots ,{{\hat y}_i}, \cdots ,{y_{n + 1}}} \right]}  = 0.\]
So  we obtain ${{A_3}}  = 0.$	
	
By Equation (\ref{(np3)}), for fixed $j$ and $t$, we have	
	\[\sum\limits_{k = 1}^{n + 1} {{{( - 1)}^{k + i}}{x_i}\left[ {{y_k},{x_1}, \cdots ,D({x_j}),D({x_t}), \cdots ,{{\hat x}_i}, \cdots ,{x_n}} \right]\left[ {{y_1}, \cdots ,{{\hat y}_k}, \cdots ,{y_{n + 1}}} \right]}  = 0.\]
So  we obtain  $  {{A_4}} =0.$
	
By Equation (\ref{(strong1)}), for fixed $i$ and $k$,  we have
\begin{eqnarray*}
	&&{( - 1)^{k + i - 1}}D({y_i})\left[ {D({y_k})\left[ {{x_1}, \cdots ,{x_n}} \right],{y_1}, \cdots ,{{\hat y}_i}, \cdots ,{{\hat y}_k}, \cdots ,{y_{n + 1}}} \right]\\
	&&+ {( - 1)^{i + k}}D({y_k})\left[ {D({y_i})\left[ {{x_1}, \cdots ,{x_n}} \right],{y_1}, \cdots ,{{\hat y}_i}, \cdots ,{{\hat y}_k}, \cdots ,{y_{n + 1}}} \right]\\
	&=& {( - 1)^{k + i - 1}}D({y_i})\left[ {D({y_k}),{y_1}, \cdots ,{{\hat y}_i}, \cdots ,{{\hat y}_k}, \cdots ,{y_{n + 1}}} \right]\left[ {{x_1}, \cdots ,{x_n}} \right]\\
	&&+ {( - 1)^{i + k}}D({y_k})\left[ {D({y_i}),{y_1}, \cdots ,{{\hat y}_i}, \cdots ,{{\hat y}_k}, \cdots ,{y_{n + 1}}} \right]\left[ {{x_1}, \cdots ,{x_n}} \right].
\end{eqnarray*}

Thus, we obtain
\begin{eqnarray*}
	{{A_5}} & = &\sum\limits_{k = 1}^{n + 1} {\sum\limits_{i = 1}^{k - 1} {{{( - 1)}^{k + i - 1}}D({y_i})\left[ {D({y_k}),{y_1}, \cdots ,{{\hat y}_i}, \cdots ,{{\hat y}_k}, \cdots ,{y_{n + 1}}} \right]\left[ {{x_1}, \cdots ,{x_n}} \right]} } \\
	&&	+ \sum\limits_{k = 1}^{n + 1} {\sum\limits_{i = k + 1}^{n + 1} {{{( - 1)}^{i + k}}D({y_i})\left[ {D({y_k}),{y_1}, \cdots ,{{\hat y}_k}, \cdots ,{{\hat y}_i}, \cdots ,{y_{n + 1}}} \right]\left[ {{x_1}, \cdots ,{x_n}} \right]} }.
\end{eqnarray*}	

By Equation (\ref{(np1)}), for fixed $j$ and $k$, we have
\begin{eqnarray*}
	&&\sum\limits_{i = 1}^n {{{( - 1)}^{k + i}}{x_i}\left[ {D({y_k}),{x_1}, \cdots ,D({x_j}), \cdots ,{{\hat x}_i}, \cdots ,{x_n}} \right]} \\
	&=& {( - 1)^{k - 1}}D({y_k})\left[ {{x_1}, \cdots ,D({x_j}), \cdots ,{x_n}} \right] + {( - 1)^{k + j - 1}}D({x_j})\left[ {D({y_k}),{x_1}, \cdots ,{x_n}} \right]\\
	&=& {( - 1)^{k + j}}D({y_k})\left[ {D({x_j}),{x_1}, \cdots ,{{\hat x}_j}, \cdots ,{x_n}} \right] + {( - 1)^{k + j - 1}}D({x_j})\left[ {D({y_k}),{x_1}, \cdots ,{x_n}} \right].
\end{eqnarray*}	
Thus, we get
	\begin{eqnarray*}
		 {{A_6}} &	= &\sum\limits_{k = 1}^{n + 1} {\sum\limits_{j = 1}^n {{{( - 1)}^{k + j}}D({y_k})\left[ {D({x_j}),{x_1}, \cdots ,{{\hat x}_j}, \cdots ,{x_n}} \right]\left[ {{y_1}, \cdots ,{{\hat y}_k}, \cdots ,{y_{n + 1}}} \right]} } \\
		&&+ \sum\limits_{k = 1}^{n + 1} {\sum\limits_{j = 1}^n {{{( - 1)}^{k + j - 1}}D({x_j})\left[ {D({y_k}),{x_1}, \cdots ,{x_n}} \right]\left[ {{y_1}, \cdots ,{{\hat y}_k}, \cdots ,{y_{n + 1}}} \right]} } .
\end{eqnarray*}

By Equation (\ref{(np2)}), for fixed $j$ and $i$, we have
	\begin{eqnarray*}
	&&	\sum\limits_{k = i + 1}^{n + 1} {{{( - 1)}^{k + i + j - 1}}D({y_i})\left[ {D({x_j})\left[ {{y_k},{x_1}, \cdots ,{{\hat x}_j}, \cdots ,{x_n}} \right],{y_1}, \cdots ,{{\hat y}_i}, \cdots ,{{\hat y}_k}, \cdots ,{y_{n + 1}}} \right]} \\
	&&	+ \sum\limits_{k = 1}^{i - 1} {{{( - 1)}^{k + i + j}}D({y_i})\left[ {D({x_j})\left[ {{y_k},{x_1}, \cdots ,{{\hat x}_j}, \cdots ,{x_n}} \right],{y_1}, \cdots ,{{\hat y}_k}, \cdots ,{{\hat y}_i}, \cdots ,{y_{n + 1}}} \right]} \\
		&=& {( - 1)^{j + i - 1}}D({y_i})\left[ {D({x_j})\left[ {{y_1}, \cdots ,{{\hat y}_i}, \cdots ,{y_{n + 1}}} \right],{x_1}, \cdots ,{{\hat x}_j}, \cdots ,{x_n}} \right].
	\end{eqnarray*}
So we obtain
	\[ {{{A_7}}} = \sum\limits_{j = 1}^n {\sum\limits_{i = 1}^{n + 1} {{{( - 1)}^{j + i - 1}}D({y_i})\left[ {D({x_j})\left[ {{y_1}, \cdots ,{{\hat y}_i}, \cdots ,{y_{n + 1}}} \right],{x_1}, \cdots ,{{\hat x}_j}, \cdots ,{x_n}} \right]} } .\]
	
By Equation (\ref{(strong1)}),  we have
\begin{eqnarray*}
		&&{( - 1)^{i + j}}D({y_i})\left[ {D({x_j}),{x_1}, \cdots ,{{\hat x}_j}, \cdots ,{x_n}} \right]\left[ {{y_1}, \cdots ,{{\hat y}_i}, \cdots ,{y_{n + 1}}} \right]\\
		&&+ {( - 1)^{i + j - 1}}D({x_j})\left[ {D({y_i}),{x_1}, \cdots ,{x_n}} \right]\left[ {{y_1}, \cdots ,{{\hat y}_i}, \cdots ,{y_{n + 1}}} \right]\\
		&&+ {( - 1)^{j + i - 1}}D({y_i})\left[ {D({x_j})\left[ {{y_1}, \cdots ,{{\hat y}_i}, \cdots ,{y_{n + 1}}} \right],{x_1}, \cdots ,{{\hat x}_j}, \cdots ,{x_n}} \right]\\
		&=& {( - 1)^{j + i - 1}}D({x_j})\left[ {D({y_i})\left[ {{y_1}, \cdots ,{{\hat y}_i}, \cdots ,{y_{n + 1}}} \right],{x_1}, \cdots ,{{\hat x}_j}, \cdots ,{x_n}} \right].
	\end{eqnarray*}
So we get
	$${{{A_6}}}  +  {{{A_7}}}  = \sum\limits_{i = 1}^{n + 1} {\sum\limits_{j = 1}^n {{{( - 1)}^{j + i - 1}}D({x_j})\left[ {D({y_i})\left[ {{y_1}, \cdots ,{{\hat y}_i}, \cdots ,{y_{n + 1}}} \right],{x_1}, \cdots ,{{\hat x}_j}, \cdots ,{x_n}} \right].} } $$

Thus, we have
	$$
	\sum\limits_{i = 1}^7 {{A_i}} 	=  {{A_1}}  +{{A_5}} + {{A_6}}  + {{A_7}} =\left[ {\left[ {{y_1}, \cdots ,{y_{n + 1}}} \right],{x_1}, \cdots ,{x_n}} \right].
	$$
Therefore, $(L,\mu_{n+1} )$ is an $(n+1)$-Lie algebra. 	
\end{proof}

Now  we can prove Conjecture \ref{con} for strong transposed Poisson $n$-Lie algebras.
	
\begin{theorem}\label{main}
With the notations in Theorem \ref{lie3.4}, $(L, \cdot ,\mu_{n+1} )$ is a strong transposed Poisson
	$(n + 1)$-Lie algebra.	
\end{theorem}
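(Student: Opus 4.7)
The plan is to verify the two remaining axioms of a strong transposed Poisson $(n+1)$-Lie algebra, namely the transposed Poisson compatibility
\[ (n+1)h\,\mu_{n+1}(x_1,\ldots,x_{n+1}) = \sum_{i=1}^{n+1}\mu_{n+1}(x_1,\ldots,hx_i,\ldots,x_{n+1}) \]
and the strong identity (\ref{(strong)}) with $n$ replaced by $n+1$, both for the bracket $\mu_{n+1}$. The fundamental $(n+1)$-Lie identity is already supplied by Theorem \ref{lie3.4}, and the skew-symmetry of $\mu_{n+1}$ is immediate from its defining formula (\ref{(3.3)}) together with the skew-symmetry of $[-,\cdots,-]$.

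For the compatibility axiom, I would expand each summand on the right-hand side via (\ref{(3.3)}) and apply the Leibniz rule $D(hx_i) = D(h)x_i + hD(x_i)$. The ``diagonal'' contributions, in which $D$ falls on the modified entry, produce
\[ D(h)\sum_{i=1}^{n+1}(-1)^{i-1}x_i[x_1,\ldots,\hat{x}_i,\ldots,x_{n+1}] + h\sum_{i=1}^{n+1}(-1)^{i-1}D(x_i)[x_1,\ldots,\hat{x}_i,\ldots,x_{n+1}], \]
in which the first sum vanishes by Equation (\ref{(np1)}) and the second equals $h\,\mu_{n+1}(x_1,\ldots,x_{n+1})$. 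For each fixed $j$, the off-diagonal contributions containing $D(x_j)$ collapse via Equation (\ref{(np)}) applied to the $n$-Lie bracket $[x_1,\ldots,\hat{x}_j,\ldots,x_{n+1}]$ to $(-1)^{j-1}D(x_j)\cdot nh[x_1,\ldots,\hat{x}_j,\ldots,x_{n+1}]$, and summing over $j$ yields $nh\,\mu_{n+1}(x_1,\ldots,x_{n+1})$. Adding the two pieces gives $(n+1)h\,\mu_{n+1}$, as required.

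The main obstacle is the strong identity, whose proof I expect to follow the combinatorial bookkeeping template of Theorem \ref{lie3.4}. I would expand each of the three $\mu_{n+1}$-terms on the LHS via (\ref{(3.3)}), apply $D(hy_k)=D(h)y_k+hD(y_k)$ wherever $h$ appears inside a bracket, and sort the resulting terms according to whether $D$ produces a factor of $D(h)$, $D(y_k)$, or $D(x_j)$. The $D(h)$-terms cancel pairwise between $y_1\mu_{n+1}(hy_2,\ldots)$ and $y_2\mu_{n+1}(hy_1,\ldots)$ by commutativity of the product. The $D(y_k)$-contributions from $y_{3-k}\,\mu_{n+1}(hy_k,x_1,\ldots,x_n)$ take the form $\pm\,y_{3-k}\,h\,D(y_k)[x_1,\ldots,x_n]$, and are cancelled by the $D(y_k)$-contributions extracted from the third summand after invoking Equation (\ref{(np1)}) in the form $\sum_i(-1)^{i-1}x_i[y_{3-k},x_1,\ldots,\hat{x}_i,\ldots,x_n]=y_{3-k}[x_1,\ldots,x_n]$. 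Finally, the $D(x_j)$-contributions from the first two terms are matched by applying the strong identity (\ref{(strong)}) to each $n$-Lie bracket $[hy_k,x_1,\ldots,\hat{x}_j,\ldots,x_n]$, rewriting it as a combination of $hx_i[y_1,y_2,\ldots]$-terms, and invoking Proposition \ref{2.8} where an $hD(y_k)$- versus $D(y_k)h$-swap is needed. The hard part is purely the careful sign-and-index bookkeeping across three nested sums with hatted indices required to see every term cancel.
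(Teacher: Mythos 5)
Your plan reproduces the paper's proof essentially step for step: the compatibility identity is verified exactly as you describe (Leibniz rule on $D(hx_i)$, the $D(h)x_i$-terms killed by Equation (\ref{(np1)}), the off-diagonal $D(x_j)$-terms collapsing through Equation (\ref{(np)}) of the $n$-bracket to give $nh\mu_{n+1}$), and the strong identity is proved by sorting the expansion according to which factor receives $D$, with the $D(h)$-terms cancelling by commutativity, each $D(y_k)$-block vanishing by Equation (\ref{(np1)}), and each $D(x_j)$-block being precisely $D(x_j)$ times an instance of Equation (\ref{(strong)}) for the original bracket. The only small discrepancy is that Proposition \ref{2.8} is not needed here --- the paper invokes it only inside the proof of Theorem \ref{lie3.4} --- since in each $D(x_j)$-block the operator $D$ never lands on a $y_k$, so no $hD(y_k)$ versus $D(y_k)h$ swap arises.
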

\begin{proof}
For convenience, we denote ${\mu _{n + 1}}\left( {{x_1}, \cdots ,{x_{n + 1}}} \right): = \left[ {{x_1}, \cdots ,{x_{n + 1}}} \right].$
According to Theorem \ref{lie3.4}, we only need to prove Equation (\ref{(np)}) and Equation (\ref{(strong)}).

{\emph{Proof of Equation (\ref{(np)})}}. By Equation (\ref{(3.3)}), we have
\begin{eqnarray*}
	&&\sum\limits_{i = 1}^{n + 1} {\left[ {{x_1}, \cdots ,h{x_i}, \cdots ,{x_{n + 1}}} \right]} \\
	&=& D(h{x_1})\left[ {{x_2}, \cdots ,{x_{n + 1}}} \right] + \sum\limits_{j = 2}^{n + 1} {{{( - 1)}^{j - 1}}D({x_j})\left[ {h{x_1},{x_2}, \cdots ,{{\hat x}_j}, \cdots ,{x_{n + 1}}} \right]} \\
	&&- D(h{x_2})\left[ {{x_1},{x_3}, \cdots ,{x_{n + 1}}} \right] + \sum\limits_{j = 1,j \ne 2}^{n + 1} {{{( - 1)}^{j - 1}}D({x_j})\left[ {{x_1},h{x_2},{x_3}, \cdots ,{{\hat x}_j}, \cdots ,{x_{n + 1}}} \right]} \\
	&&+  \cdots  + {( - 1)^n}D(h{x_n})\left[ {{x_1}, \cdots ,{x_n}} \right] + \sum\limits_{j = 1}^n {{{( - 1)}^{j - 1}}D({x_j})\left[ {{x_1}, \cdots ,{{\hat x}_j}, \cdots ,{x_n},h{x_{n + 1}}} \right]} \\
		\end{eqnarray*}
\begin{eqnarray*}
	&=& \sum\limits_{i = 1}^{n + 1} {{{( - 1)}^{i - 1}}D(h{x_i})\left[ {{x_1}, \cdots ,{{\hat x}_i}, \cdots ,{x_n}} \right]} \\
	&&+\sum\limits_{i = 1}^{n + 1} {\sum\limits_{j = 1,j \ne i}^{n + 1} {{{( - 1)}^{j - 1}}D({x_j})\left[ {{x_1},, \cdots ,h{x_i}, \cdots ,{{\hat x}_j}, \cdots ,{x_{n + 1}}} \right]} } \\
	&=& \sum\limits_{i = 1}^{n + 1} {{{( - 1)}^{i - 1}}D(h{x_i})} \left[ {{x_1}, \cdots ,{{\hat x}_i}, \cdots ,{x_{n + 1}}} \right]\\
	&&+ \sum\limits_{j = 1}^{n + 1} {\sum\limits_{i = 1,i \ne j}^{n + 1} {{{( - 1)}^{j - 1}}D({x_j})\left[ {{x_1}, \cdots ,h{x_i}, \cdots ,{{\hat x}_j}, \cdots ,{x_{n + 1}}} \right]} } \\
	&=& \sum\limits_{i = 1}^{n + 1} {{{( - 1)}^{i - 1}}hD({x_i})} \left[ {{x_1}, \cdots ,{{\hat x}_i}, \cdots ,{x_{n + 1}}} \right] \\
	&&+ \sum\limits_{i = 1}^{n + 1} {{{( - 1)}^{i - 1}}{x_i}D(h)} \left[ {{x_1}, \cdots ,{{\hat x}_i}, \cdots ,{x_{n + 1}}} \right]\\
	&&	+ \sum\limits_{j = 1}^{n + 1} {\sum\limits_{i = 1,i \ne j}^{n + 1} {{{( - 1)}^{j - 1}}D({x_j})\left[ {{x_1}, \cdots ,h{x_i}, \cdots ,{{\hat x}_j}, \cdots ,{x_{n + 1}}} \right]} } \\
		&	\mathop  = \limits^{(\ref{(np1)})}& \sum\limits_{i = 1}^{n + 1} {{{( - 1)}^{i - 1}}hD({x_i})} \left[ {{x_1}, \cdots ,{{\hat x}_i}, \cdots ,{x_{n + 1}}} \right] \\
	&&+\sum\limits_{j = 1}^{n + 1} {\sum\limits_{i = 1,i \ne j}^{n + 1} {{{( - 1)}^{j - 1}}D({x_j})\left[ {{x_1}, \cdots ,h{x_i}, \cdots ,{{\hat x}_j}, \cdots ,{x_{n + 1}}} \right]} } \\
	&\mathop  = \limits^{(\ref{(3.3)})}& h\left[ {{x_1}, \cdots ,{x_{n + 1}}} \right] + \sum\limits_{j = 1}^{n + 1} {\sum\limits_{i = 1,i \ne j}^{n + 1} {{{( - 1)}^{j - 1}}D({x_j})\left[ {{x_1}, \cdots ,h{x_i}, \cdots ,{{\hat x}_j}, \cdots ,{x_{n + 1}}} \right]} } \\
		&\mathop  = \limits^{(\ref{(np)})}& h\left[ {{x_1}, \cdots ,{x_{n + 1}}} \right] + nh\sum\limits_{j = 1}^{n + 1} {{{( - 1)}^{j - 1}}D({x_j})\left[ {{x_1}, \cdots ,{{\hat x}_j}, \cdots ,{x_{n + 1}}} \right]} \\
	&\mathop  = \limits^{(\ref{(3.3)})}& h\left[ {{x_1}, \cdots ,{x_{n + 1}}} \right] + nh\left[ {{x_1}, \cdots ,{x_{n + 1}}} \right]\\
	&=& (n + 1)h\left[ {{x_1}, \cdots ,{x_{n + 1}}} \right].
\end{eqnarray*}

{\emph{Proof of Equation (\ref{(strong)})}}.  By Equation (\ref{(3.3)}), we have
\begin{eqnarray*}
		&&{y_1}\left[ {h{y_2},{x_1}, \cdots ,{x_n}} \right] - {y_2}\left[ {h{y_1},{x_1}, \cdots ,{x_n}} \right] + \sum\limits_{i = 1}^n {{{( - 1)}^{i - 1}}h{x_i}\left[ {{y_1},{y_2},{x_1}, \cdots ,{{\hat x}_i}, \cdots ,{x_n}} \right]} \\
	&= &{y_1}{y_2}D(h)\left[ {{x_1}, \cdots ,{x_n}} \right] + {y_1}hD({y_2})\left[ {{x_1}, \cdots ,{x_n}} \right] - {y_1}D({x_1})\left[ {h{y_2},{x_2}, \cdots ,{x_n}} \right]\\
		\end{eqnarray*}
\begin{eqnarray*}
	&&+ {y_1}D({x_2})\left[ {h{y_2},{x_1},{x_3}, \cdots ,{x_n}} \right] +  \cdots  + {( - 1)^n}{y_1}D({x_n})\left[ {h{y_2},{x_1}, \cdots ,{x_{n - 1}}} \right]\\
		&&- {y_2}{y_1}D(h)\left[ {{x_1}, \cdots ,{x_n}} \right] - {y_2}hD({y_1})\left[ {{x_1}, \cdots ,{x_n}} \right] + {y_2}D({x_1})\left[ {h{y_1},{x_2}, \cdots ,{x_n}} \right]\\
		&&- {y_2}D({x_2})\left[ {h{y_1},{x_1},{x_3}, \cdots ,{x_n}} \right] +  \cdots  + {( - 1)^{n - 1}}{y_2}D({x_n})\left[ {h{y_1},{x_1}, \cdots ,{x_{n - 1}}} \right]\\
&&	+ h{x_1}D({y_1})\left[ {{y_2},{x_2}, \cdots ,{x_n}} \right] - h{x_1}D({y_2})\left[ {{y_1},{x_2}, \cdots ,{x_n}} \right]\\
	&&+ h{x_1}D({x_2})\left[ {{y_1},{y_2},{x_3}, \cdots ,{x_n}} \right]+  \cdots  + {( - 1)^{n + 1}}h{x_1}D({x_n})\left[ {{y_1},{y_2},{x_2}, \cdots ,{x_{n - 1}}} \right]\\
	&&- h{x_2}D({y_1})\left[ {{y_2},{x_1},{x_3}, \cdots ,{x_n}} \right] + h{x_2}D({y_2})\left[ {{y_1},{x_1},{x_3}, \cdots ,{x_n}} \right] \\
	&&- h{x_2}D({x_1})\left[ {{y_1},{y_2},{x_3}, \cdots ,{x_n}} \right]+  \cdots  + {( - 1)^{n + 2}}h{x_2}D({x_n})\left[ {{y_1},{y_2},{x_1},{x_3}, \cdots ,{x_{n - 1}}} \right]\\
	&&+	\cdots + {( - 1)^{n - 1}}h{x_n}D({y_1})\left[ {{y_2},{x_1}, \cdots ,{x_{n - 1}}} \right] + {( - 1)^n}h{x_n}D({y_2})\left[ {{y_1},{x_1}, \cdots ,{x_{n - 1}}} \right]\\
	&&+ {( - 1)^{n + 1}}h{x_n}D({x_1})\left[ {{y_1},{y_2},{x_2}, \cdots ,{x_{n - 1}}} \right] \\
	&&+  \cdots  + {( - 1)^{2n - 1}}h{x_n}D({x_{n - 1}})\left[ {{y_1},{y_2},{x_1}, \cdots ,{x_{n - 2}}} \right]\\
	&= & - {y_2}hD({y_1})\left[ {{x_1}, \cdots ,{x_n}} \right] + h{x_1}D({y_1})\left[ {{y_2},{x_2}, \cdots ,{x_n}} \right]\\
	&&+ \sum\limits_{i = 2}^n {{{( - 1)}^{i - 1}}h{x_i}D({y_1})\left[ {{y_2},{x_1}, \cdots ,{{\hat x}_i}, \cdots ,{x_n}} \right]} \\
	&&+ {y_1}hD({y_2})\left[ {{x_1}, \cdots ,{x_n}} \right] - h{x_1}D({y_2})\left[ {{y_1},{x_2}, \cdots ,{x_n}} \right] \\
	&&+ \sum\limits_{i = 2}^n {{{( - 1)}^i}h{x_i}D({y_2})\left[ {{y_1},{x_1}, \cdots ,{{\hat x}_i}, \cdots ,{x_n}} \right]}\\
	&&- {y_1}D({x_1})\left[ {h{y_2},{x_2}, \cdots ,{x_n}} \right] + {y_2}D({x_1})\left[ {h{y_1},{x_2}, \cdots ,{x_n}} \right]\\
	&&+ \sum\limits_{i = 2}^n {{{( - 1)}^{i - 1}}h{x_i}D({x_1})\left[ {{y_1},{y_2},{x_2}, \cdots ,{{\hat x}_i}, \cdots ,{x_n}} \right]} \\
	&&+ {y_1}D({x_2})\left[ {h{y_2},{x_1},{x_3}, \cdots ,{x_n}} \right] - {y_2}D({x_2})\left[ {h{y_1},{x_1},{x_3}, \cdots ,{x_n}} \right]\\
	&&+ h{x_1}D({x_2})\left[ {{y_1},{y_2},{x_3}, \cdots ,{x_n}} \right] + \sum\limits_{i = 3}^n {{{( - 1)}^i}h{x_i}D({x_2})\left[ {{y_1},{y_2},{x_1},{x_3}, \cdots ,{{\hat x}_i}, \cdots ,{x_n}} \right]} \\	
	&&\cdots+ {( - 1)^n}{y_1}D({x_n})\left[ {h{y_2},{x_1}, \cdots ,{x_{n - 1}}} \right] + {( - 1)^{n - 1}}{y_2}D({x_n})\left[ {h{y_1},{x_1}, \cdots ,{x_{n - 1}}} \right]\\
	&&+ \sum\limits_{j = 1}^{n - 1} {{{( - 1)}^{n + j - 1}}h{x_j}D({x_n})\left[ {{y_1},{y_2},{x_1}, \cdots ,{{\hat x}_j}, \cdots ,{x_{n - 1}}} \right]} \\
	&=& {A_1} + {A_2} + \sum\limits_{i = 1}^n {{B_i}},
\end{eqnarray*}
where
\begin{eqnarray*}
	{A_1} &:=& - {y_2}hD({y_1})\left[ {{x_1}, \cdots ,{x_n}} \right] + \sum\limits_{i = 1}^n {{{( - 1)}^{i -1}}h{x_i}D({y_1})\left[ {{y_2},{x_1}, \cdots ,{{\hat x}_i}, \cdots ,{x_n}} \right]},\\
	{A_2} &:=& {y_1}hD({y_2})\left[ {{x_1}, \cdots ,{x_n}} \right] + \sum\limits_{i = 1}^n {{{( -1)}^i}h{x_i}D({y_2})\left[ {{y_1},{x_1}, \cdots ,{{\hat x}_i}, \cdots ,{x_n}} \right]},\\
		\end{eqnarray*}
and for any $1\leq i\leq n,$
\begin{eqnarray*}
	{B_i} &:= &{( - 1)^i}{y_1}D({x_i})\left[ {h{y_2},{x_1}, \cdots ,{{\hat x}_i}, \cdots ,{x_n}} \right] + {( - 1)^{i - 1}}{y_2}D({x_i})\left[ {h{y_1},{x_1}, \cdots ,{{\hat x}_i}, \cdots ,{x_n}} \right]\\
	&&+ \sum\limits_{j = 1}^{i - 1} {{{( - 1)}^{i + j - 1}}h{x_j}D({x_i})\left[ {{y_1},{y_2},{x_1}, \cdots ,{{\hat x}_j}, \cdots ,{{\hat x}_i}, \cdots ,{x_n}} \right]} \\
	&&+ \sum\limits_{j = i + 1}^n {{{( - 1)}^{i + j}}h{x_j}D({x_i})\left[ {{y_1},{y_2},{x_1}, \cdots ,{{\hat x}_i}, \cdots ,{{\hat x}_j}, \cdots ,{x_n}} \right]} .
\end{eqnarray*}

By Equation (\ref{(np1)}), we have
\begin{eqnarray*}
	{A_1}& = & hD({y_1})\left( { - {y_2}\left[ {{x_1}, \cdots ,{x_n}} \right] + \sum\limits_{i = 1}^n {{{( - 1)}^{i - 1}}{x_i}\left[ {{y_2},{x_1}, \cdots ,{{\hat x}_i}, \cdots ,{x_n}} \right]} } \right)= 0.
\end{eqnarray*}
Similarly,  we have ${A_2} =0.$

 By Equation (\ref{(strong)}), for any $1\leq i\leq n$, we have
\begin{eqnarray*}
		{B_i} &= &{( - 1)^i}D({x_i})({y_1}\left[ {h{y_2},{x_1}, \cdots ,{{\hat x}_i}, \cdots ,{x_n}} \right] - {y_2}\left[ {h{y_1},{x_1}, \cdots ,{{\hat x}_i}, \cdots ,{x_n}} \right]\\
		&&+ \sum\limits_{j = 1}^{i - 1} {{{( - 1)}^{j - 1}}h{x_j}\left[ {{y_1},{y_2},{x_1}, \cdots ,{{\hat x}_j}, \cdots ,{{\hat x}_i}, \cdots ,{x_n}} \right]} \\
		&&+ \sum\limits_{j = i + 1}^n {{{( - 1)}^j}h{x_j}\left[ {{y_1},{y_2},{x_1}, \cdots ,{{\hat x}_i}, \cdots ,{{\hat x}_j}, \cdots ,{x_n}} \right]} )\\
	&	=& 0.
\end{eqnarray*}

Thus, we get
\[{y_1}\left[ {h{y_2},{x_1}, \cdots ,{x_n}} \right] - {y_2}\left[ {h{y_1},{x_1}, \cdots ,{x_n}} \right] + \sum\limits_{i = 1}^n {{{( - 1)}^{i - 1}}h{x_i}\left[ {{y_1},{y_2},{x_1}, \cdots ,{{\hat x}_i}, \cdots ,{x_n}} \right]}  = 0.\]
The proof is completed.		
\end{proof}

	\section*{Acknowledgments}
	
	Ming Ding was supported by Guangdong Basic and Applied Basic Research Foundation (2023A1515011739) and  the Basic Research Joint Funding Project of University and Guangzhou City under Grant 202201020103.


\begin{thebibliography}{99}
\bibitem{AM}
A. Agore and G. Militaru, Jacobi and Poisson algebras, J. Noncommut. Geom. 9 (2015), no. 4, 1295-1342.
\bibitem{BBGW}
C. Bai, R. Bai, L. Guo and Y. Wu, Transposed Poisson algebras, Novikov-Poisson algebras and $3$-Lie algebras, J. Algebra 632 (2023), 535-566.
\bibitem{BN1}
J. Bagger and N. Lambert, Gauge symmetry and supersymmetry of multiple M2-branes, Phys. Rev. D 77
(2008), no. 8, 065008, 6 pp.
\bibitem{BN2}
J. Bagger and N. Lambert, Three-algebras and $\mathcal{N}=6$ Chern-Simons gauge theories, Phys. Rev. D 79 (2009), no. 2, 025002, 8 pp.
\bibitem{BFK}
P.D. Beites, B.L.M. Ferreira and I. Kaygorodov, Transposed Poisson structures, arXiv: 2207.00281.
\bibitem{BV}
K. Bhaskara and K. Viswanath, Poisson Algebras and Poisson Manifolds, Pitman Research Notes in Mathematics Series, 174. Longman Scientific $\&$ Technical, Harlow; copublished in the United States with John Wiley $\&$ Sons, Inc., New York, 1988. viii+128 pp.
\bibitem{BY}
Y. Billig, Towards Kac-van de Leur conjecture: locality of superconformal algebras, Adv. Math. 400 (2022), Paper No. 108295, 32 pp.


\bibitem{CK0}
N. Cantarini and V. Kac, Classification of linearly compact simple Jordan and generalized Poisson
superalgebras, J. Algebra 313 (2007), no. 1, 100-124.

\bibitem{CK}
N. Cantarini and V. Kac, Classification of linearly compact simple Nambu-Poisson algebras, J. Math. Phys. 57 (2016), no. 5, 051701, 18 pp.


\bibitem{CP}
V. Chari and A. Pressley, A Guide to Quantum Groups, Cambridge University Press, Cambridge, 1994. xvi+651 pp.

\bibitem{Dot}
V. Dotsenko, Algebraic structures of $F$-manifolds via pre-Lie algebras, Ann. Mat. Pura Appl. (4) 198 (2019), no. 2, 517-527.


\bibitem{Dzh}
A.S. Dzhumadil$'$dav, Identities and derivations for Jacobian algebras, Quantization, Poisson brackets and beyond (Manchester, 2001), 245-278,
Contemp. Math., 315, Amer. Math. Soc., Providence, RI, 2002.

\bibitem{FKL}
B.L.M. Ferreira, I. Kaygorodov and V. Lopatkin, $\frac{1}{2}$-derivations of Lie algebras and transposed Poisson algebras,
Rev. R. Acad. Cienc. Exactas F\'{\i}s. Nat. Ser. A Mat. RACSAM 115 (2021), no. 3, Paper No. 142, 19 pp.


\bibitem{Fil}
V.T. Filippov, $n$-Lie algebras, Sibirsk. Mat. Zh. 26 (1985), no. 6, 126-140.

\bibitem{Ger}
M. Gerstenhaber, The cohomology structure of an associative ring, Ann. of Math. (2) 78 (1963), 267-288.

\bibitem{KS}
Y. Kosmann-Schwarzbach, From Poisson algebras to Gerstenhaber algebras, Ann. Inst. Fourier (Grenoble) 46 (1996), no. 5, 1243-1274.


\bibitem{LS}
I. Laraiedh and  S. Silvestrov, Transposed Hom-Poisson and Hom-pre-Lie Poisson algebras and bialgebras,
arXiv: 2106.03277.

\bibitem{LB}
G. Liu and C. Bai, A bialgebra theory for transposed Poisson algebras via anti-pre-Lie bialgebras and anti-pre-Lie-
Poisson bialgebras, arXiv:2309.16174.
\bibitem{LBS}
J. Liu, C. Bai and Y. Sheng, Noncommutative Poisson bialgebras, J. Algebra 556 (2020), 35-66.

\bibitem{ML}
T. Ma and  B. Li, Transposed BiHom-Poisson algebras, Comm. Algebra 51 (2023), no. 2, 528-551.


\bibitem{Nam}
Y. Nambu, Generalized Hamiltonian dynamics, Phys. Rev. D (3) 7 (1973), 2405-2412.

\bibitem{NB}
X. Ni and  C. Bai , Poisson bialgebras, J. Math. Phys. 54 (2013), no. 2, 023515, 14 pp.


\bibitem{Od}
A. Odzijewicz, Hamiltonian and quantum mechanics, Lectures on Poisson geometry, 385-472, Geom. Topol. Monogr., 17, Geom. Topol. Publ., Coventry, 2011.


\bibitem{Po}
A. Polishchuk, Algebraic geometry of Poisson brackets, Algebraic geometry, 7. J. Math. Sci. (New York) 84 (1997), no. 5, 1413-1444.


\bibitem{Rin}
G. Rinehart, Differential forms on general commutative algebras, Trans. Amer. Math. Soc. 108 (1963), 195-222.


\bibitem{Tak}
L. Takhtajan, On foundation of the generalized Nambu mechanics, Comm. Math. Phys. 160 (1994), no. 2, 295-315.


\bibitem{XuX}
X. Xu, Novikov-Poisson algebras, J. Algebra 190 (1997), no. 2, 253-279.


	\end{thebibliography}
\end{document}